\newcommand{\Z}{{\mathbb{Z}}}
\newcommand{\Q}{{\mathbb{Q}}}
\newcommand{\C}{{\mathbb{C}}}
\newcommand{\A}{{\mathbb{A}}}
\newcommand{\G}{{\mathbb{G}}}
\newcommand{\OO}{{\scr{O}}}
\newcommand{\Sp}{{\mathrm{Sp}}}
\newcommand{\red}{{\mathrm{red}}}
\newcommand{\ord}{{\mathrm{ord}}}
\newcommand{\sqf}{{\mathrm{sf}}}
\newcommand{\GL}{{\mathrm{GL}}}
\newcommand{\Hom}{{\mathop{\mathrm{Hom}}\nolimits}}
\newcommand{\End}{{\mathop{\mathrm{End}}\nolimits}}
\newcommand{\sHom}{{\mathop{{\scr Hom}}\nolimits}}
\newcommand{\ip}[1]{\langle #1 \rangle}
\newcommand{\scr}[1]{\EuScript{#1}}
\newcommand{\Tate}{{\underline{\mathrm{Tate}}}}
\author{Nick Ramsey} \address{Department of Mathematics, DePaul
  University} \email{nramsey@depaul.edu} \title[interpolation of
  square roots]{$p$-adic interpolation of square roots of central
  $L$-values of modular forms}
\begin{document}
\frontmatter
\maketitle
\tableofcontents

\section{Introduction}

The theme of $p$-adic interpolation of special values of $L$-functions
has become a standard one in algebraic number theory.  In
\cite{serrepadic}, Serre gave a construction of the Kubota-Leopoldt
$p$-adic $L$-function by introducing $p$-adic modular forms and
studying their coefficients.  Since the work of Serre, this
relationship between $p$-adic variation of $L$-values and $p$-adic
variation of automorphic forms has become increasingly clear.  In the
case of modular forms for $\GL_2$, Coleman and Mazur (and later
Buzzard in greater generality) have constructed a rigid-analytic
\emph{eigencurve} that is the universal $p$-adic family of
(overconvergent, finite-slope) modular eigenforms (see \cite{colmaz},
\cite{buzzardeigenvarieties}).  This is the natural domain for the
study of questions related to the $p$-adic variation of these modular
forms.

Let $p$ be an odd prime and let $N$ be a positive integer that is
relatively prime to $p$.  We will denote by $D(N)$ the tame level $N$
cuspidal eigencurve.  The rigid-analytic space $D(N)$ is constructed
(as outlined in Section \ref{buzzmachine}) so that its points
parameterize finite-slope systems of eigenvalues of the Hecke
operators $T_\ell$ for $\ell\nmid Np$ and $U_p$, as well as the
diamond operators $\ip{d}_N$ for $d\in (\Z/N\Z)^\times$, acting on
cuspidal modular forms of tame level $N$.  In particular, any
classical modular form $f$ of level $Np^m$ that is an eigenform for
the Hecke operators away from the level, is an eigenform for $U_p$
with non-zero eigenvalue, and has a nebentypus character, furnishes a
point on $D(N)$.  Moreover, the collection of such \emph{classical
  points} is Zariski-dense.  By multiplicity one, we may attach to
each classical point $x\in D(N)$ a unique normalized newform
$f_0=f_0(x)$ having the system of Hecke eigenvalues prescribed by $x$.
The aim of this paper is to interpolate a square-root of a certain
ratio of twists of the central $L$-value of $f_0$ across the ``even''
part of $D(N)$.

In the spirit of simultaneous $p$-adic variation of $L$-values and
automorphic forms, a number of authors have constructed ``several
variable'' $p$-adic $L$-functions that interpolate classical critical
values of an $L$-function associated to one or more modular forms
varying in a $p$-adic family.  In particular, in the ordinary case, a
two-variable $p$-adic $L$-function interpolating the classical
critical $L$-values in a Hida family of ordinary forms has been
constructed in various places (see for example \cite{kitagawa},
\cite{hidameasure2}, \cite{greenbergstevens1}).  A similar result was
proven for Coleman families of arbitrary (fixed) finite slope by
Panchishkin in \cite{panchishkin}.  Additionally, two-variable $p$-adic
$L$-functions of at least two sorts have been constructed on the
eigencurve in \cite{emerton} and \cite{bellaiche}.

Of particular interest are the central values of the $L$-functions of
modular forms, as these are the values connected to the Birch and
Swinnerton-Dyer conjecture and its analogs in higher weight.  In many
instances, these central values (appropriately normalized) are squares
in a certain number field.  In such cases, it is natural to ask
whether a square-root can be interpolated in a $p$-adic family of
forms.  In particular, one may ask when the ``diagonal'' $L$-function
obtained by specializing a two-variable $p$-adic $L$-function to the
central point is a square.

In \cite{koblitzhi}, Koblitz studies the problem of descending
congruences between modular forms of integral weight to half-integral
weight through the Shimura lifting (a problem that he attributes to a
question of Hida - see also the work of Maeda \cite{maeda}).
Waldspurger has famously established in \cite{waldspurger} a
connection between squares of the Fourier coefficients of
half-integral weight modular forms and the central $L$-values of their
Shimura liftings.  Accordingly, Koblitz also considers, largely
conjecturally, consequences of these descended congruences for
congruences between square roots of the central $L$-values of modular
forms.  Inspired by the conjectures of Koblitz, Sofer proves in
\cite{sofer} that one can interpolate a square root of the central
$L$-value in a family of Hecke $L$-functions of imaginary quadratic
fields.  Her arguments utilize $p$-adic properties of modular forms in
an essential way.  Interpolation of square roots has also been studied
by Harris--Tilouine in \cite{harristilouine} where these authors
interpolate a central square root of a triple product $L$-function in
which one of the three modular forms varies in a Hida family of
ordinary forms.  In \cite{hida}, Hida apparently returns to his own
question and proves a $\Lambda$-adic version of Waldspurger's result
by interpolating a square-root of the ratio of the central values of
two quadratic twists of a form lying in a Hida family.  The results of
the present paper are essentially a generalization of this work of
Hida to higher-slope, placed in the context of the eigencurve.

Our interpolation takes place across the ``even'' part of the
eigencurve, that is, the union of connected components on which the
$p$-adic weight character and tame nebentypus characters are squares.
An even classical weight character is equal to $\kappa^2$,
where $$\kappa:\Z_p^\times\longrightarrow \C_p^\times$$ is of the form
$$\kappa(t)=t^{(k-1)/2}\kappa'(t)$$ for an odd positive integer $k$
and finite-order character $\kappa'$.  The tame nebentypus of such a
point is of the form $\chi^2$ for some character $\chi$.  In this
context, $\chi_0$ will denote the character $\chi_0(n)=\chi(n)\cdot
(-1/n)^{(k-1)/2}$.

For a Dirichlet character $\psi$ modulo $N$, let $D(N)_\psi$ denote
the union of connected components of the underlying reduced space
$D(N)_\red$ parameterizing forms of tame nebentypus $\psi$.  For a
square-free positive integer $n$, $\chi_n$ will denote the quadratic
character associated to the field $\Q(\sqrt{n})$.  

As is typical, we must make a choice to relate the complex-analytic
world of holomorphic modular forms and their $L$-values and the
$p$-adic world of rigid-analytic modular forms and the interpolating
functions that we construct.  Thus, we fix once and for all an
isomorphism $\C\cong \C_p$.  However, in the interest of notational
brevity, this identification is kept implicit in the arguments and
results that follow.  The reader familiar with $p$-adic interpolation
of this flavor should have no difficulty explicating the
identification should the need arise.

The following theorem is the main result of the paper.
\begin{theo}\label{maintheorem}
 Let $p$ be an odd prime and let $N$ be a square-free positive integer
 that is relatively prime to $2p$.  Suppose that the character $\psi$
 modulo $N$ is a square and let $n$ and $m$ be square-free positive
 integers with $m/n\in (\Q_\ell^\times)^2$ for all $\ell\mid 2Np$.
 There exists a character $\chi$ modulo $8N$ with $\chi^2=\psi$ such
 that on the union of irreducible components of $D(N)_\psi$ with even
 weight on which $$L(f_0\otimes
 \chi_0^{-1}\kappa'^{-1}\chi_n,(k-1)/2)$$ does not vanish generically,
 there exists a meromorphic function $\Phi_{m,n}$ such
 that $$\Phi_{m,n}(x)^2 = \frac{L(f_0\otimes
   \chi_0^{-1}\kappa'^{-1}\chi_m,(k-1)/2)} {L(f_0\otimes
   \chi_0^{-1}\kappa'^{-1}\chi_n,(k-1)/2)}\chi(m/n)
 \kappa(m/n)(m/n)^{-1/2}$$ holds at non-critical slope classical
 points $x$ outside of a discrete set.
\end{theo}
\begin{rema}\
  \begin{enumerate}
  \item We have suppressed the dependence of $f_0$, $\kappa$, and $k$
    on the point $x$ on the right side of the equation in this
    statement for brevity.
  \item 
    The value $L(f_0\otimes \chi_0^{-1}\kappa'^{-1}\chi_n,(k-1)/2)$ is
    only defined at classical points.  To say that it ``vanishes
    generically'' may be taken to mean that, among such points, it is
    supported on set of classical points that meets each irreducible
    component in a proper analytic set (in the sense of, for example,
    \cite{bgr}).  Of course, if $L(f_0\otimes
    \chi_0^{-1}\kappa'^{-1}\chi_n,(k-1)/2)$ (appropriately normalized
    so that $p$-adic interpolation is sensible) is the value of a
    globally-defined analytic function, then this condition simply
    amounts by the density of classical points to saying that this
    function vanishes identically.
  \item 
    In general, there is a sort of interplay between the choice of
    ``nebentypus square-root'' $\chi$, the square-free integer $n$ of
    the denominator, and the irreducible component on which we seek to
    interpolate.  Theorem \ref{maintheorem} is set up to interpolate
    with a fixed choice of $n$ with just one $\chi$ on as many
    components of $D(N)_\psi$ as possible.  In fact, the analysis that
    leads to the theorem allows for somewhat greater flexibility.  For
    example, if one was instead bound to a particular $\chi$ and
    working on a particular union of irreducible components of
    $D(N)_{\chi^2}$, this analysis shows that there infinitely many
    pairs $(m,n)$ for which the function $\Phi_{m,n}$ as above can be
    constructed when the denominator does not vanish generically.  On
    the other hand, if one fixes $m$ as well, then there exist
    infinitely many such $n$ if and only if there exists a single one.
  \end{enumerate}
\end{rema}

The proof of Theorem \ref{maintheorem} relies on the aforementioned
work of Waldspurger that relates the central special values of modular
$L$-functions to Fourier coefficients of half-integral weight modular
forms.  The other major input is the work of the author (\cite{mfhi},
\cite{hieigencurve}, \cite{rigidshimura}) on $p$-adic families of such
forms.  We begin by giving a detailed construction in Section
\ref{sec:nstar} of a coherent sheaf on an eigenvariety constructed
using Buzzard's eigenvariety machine (see
\cite{buzzardeigenvarieties}) that interpolates the duals of the
eigenspaces associated to the systems of eigenvalues parameterized by
that eigenvariety.  On the half-integral weight eigencurve constructed
in \cite{hieigencurve}, Fourier coefficients furnish sections of this
sheaf.  In Section \ref{sec:lindep}, we establish a general criterion
for linear dependence of sections of a coherent sheaf on a rigid
space.  When applied to a pair of Fourier coefficient sections, this
criterion implies under suitable hypotheses that these sections are
related by a meromorphic function.  Using the interpolation of the
Shimura lifting constructed in \cite{rigidshimura}, this function can
be moved to the integral weight eigencurve where it has interpolation
property stated in Theorem \ref{maintheorem}.

Generic vanishing of the $L$-value in the denominator in Theorem
\ref{maintheorem} is an obvious impediment to interpolation across an
irreducible component.  As we will see, there are other impediments
that preclude interpolation on additional components on the
half-integral weight side.  Fortunately, these will be ameliorated in
passage to integral-weight by choosing the half-integral weight
component mapping to the desired integral weight component
judiciously.  Doing so requires a detailed analysis of Waldspurger's
description of the preimage of the Shimura lifting that we carry out
in the special case of square-free tame level.

\section{Buzzard's eigenvariety machine}\label{buzzmachine}

In \cite{buzzardeigenvarieties}, Buzzard introduces a systematic way
to construct an eigenvariety given a certain system of Banach modules
and commuting endomorphisms.  The details of this construction will be
used in what follows, so we briefly recall them here.  For a more
detailed account with proofs, see \cite{buzzardeigenvarieties}.

Let $\scr{W}$ be a reduced rigid space over a complete and
discretely-valued extension $K$ of $\Q_p$.  Fix a set $\mathbf{T}$
with a distinguished element $\phi$.  Suppose that, for each admissible
open affinoid $X\subseteq \scr{W}$, we are given a Banach module $M_X$
over $\OO(X)$ satisfying a certain technical hypothesis (called
(\emph{Pr}) in \cite{buzzardeigenvarieties}) and a map
\begin{eqnarray*}
  \mathbf{T} & \longrightarrow & \End_{\OO(X)}(M_X) \\
  t & \longmapsto & t_X
\end{eqnarray*}
whose image consists of commuting endomorphisms such that $\phi_X$ is
compact for all $X$.  Suppose also that for each pair $X_1\subseteq
X_2\subseteq \scr{W}$ we are given a continuous $\OO(X_1)$-linear
injection
$$\alpha_{12}:M_{X_1}\longrightarrow M_{X_2}
\widehat{\otimes}_{\OO(X_2)}\OO(X_1)$$
that is a ``link'' in the sense
of \cite{buzzardeigenvarieties}.  Finally suppose that the links are
equivariant for the endomorphisms associated to elements of
$\mathbf{T}$ in the obvious sense and satisfy the cocycle condition
$\alpha_{13} = \alpha_{23}\circ\alpha_{12}$ for any triple
$X_1\subseteq X_2\subseteq X_3\subseteq \scr{W}$.  

Out of this data, Buzzard constructs rigid spaces $D$ and $Z$ called
the eigenvariety and spectral variety, respectively, together with
canonical maps
$$D\longrightarrow Z\longrightarrow \scr{W}.$$
The construction of $Z$
is straightforward.  For each admissible affinoid open $X\subseteq
\scr{W}$ we let $Z_X$ be the zero locus of the Fredholm determinant
$$P_X(T)=\det(1-\phi_XT\mid M_X)$$ inside $X\times \A^1$.  The links
ensure that this determinant is independent of $X$ in the sense that
if $X_1\subseteq X_2$ then $P_{X_1}(T)$ is the image of $P_{X_2}$ in
$\OO(X_1)[\![T]\!]$.  It follows that the $Z_X$ glue to a space $Z$
equipped with a canonical projection map $Z\longrightarrow \scr{W}$.  

The construction of $D$ is more difficult.  The starting point is the
following theorem proven in \cite{buzzardeigenvarieties}.
\begin{theo}\label{cover}
  Let $R$ be a reduced affinoid algebra over $K$, let $P(T)$ be a
  Fredholm series over $R$, and let $Z\subseteq \Sp(R)\times \A^1$
  denote the hypersurface cut out by $P(T)$ equipped with the
  projection $\pi: Z\longrightarrow \Sp(R)$.  Define $\scr{C}(Z)$ to
  be the collection of admissible affinoid opens $Y$ in $Z$ such that
  \begin{itemize}
  \item $Y'=\pi(Y)$ is an admissible affinoid open in $\Sp(R)$,
  \item $\pi: Y\longrightarrow Y'$ is finite, and
  \item there exists $e\in \OO(\pi^{-1}(Y'))$ such that $e^2=e$ and $Y$
  is the zero locus of $e$.
  \end{itemize}
  Then $\scr{C}(Z)$ is an admissible cover of $Z$.
\end{theo}
\noindent We will often take $Y'$ to be connected in what follows.
This is not a serious restriction, since $Y$ is the disjoint union of
the parts lying over the various connected components of $Y'$.

Let $X\subseteq \scr{W}$ and let $Y\in\scr{C}(Z_X)$ with connected
image $Y'$.  To the choice of $Y$ we can associate a factorization
$$\det(1-(\phi_X\widehat{\otimes}1)T\mid
M_X\widehat{\otimes}_{\OO(X)}\OO(Y')) = Q(T)Q'(T)$$
into relatively
prime factors with constant term $1$.  Here, $Y$ is the
zero locus of $Q$ while $Q'$ cuts out the complement of $Y$ in
$\pi^{-1}(Y')$.  The factor $Q$ is actually a polynomial of degree
equal to the degree of the finite map $Y\longrightarrow Y'$ and has a
unit for leading coefficient.  Associated to this factorization
there is a unique decomposition
$$M_X\widehat{\otimes}_{\OO(X)}\OO(Y')\cong N\oplus F$$
into closed
$\OO(Y')$-submodules $N$ and $F$ with the property that $Q^*(\phi)$
vanishes on $N$ and is invertible on $F$.  Moreover, $N$ is projective
of rank equal to the degree of $Q$ and the characteristic power
series of $\phi$ on $N$ is $Q(T)$.  

The projectors onto the submodules $N$ and $F$ lie in the closure of
$\OO(Y')[\phi]$, so it follows from the commutativity assumption that
$N$ and $F$ are preserved by the endomorphisms
$t_X\widehat{\otimes}1$.  Let $\mathbf{T}(Y)$ denote the
$\OO(Y')$-subalgebra of $\End_{\OO(Y')}(N)$ generated by these
endomorphisms.  This algebra is finite over $\OO(Y')$ and hence
affinoid, so we may define $D_Y=\Sp(\mathbf{T}(Y))$.  Since the
polynomial $Q$ is the characteristic power series of $\phi$ acting on
$N$ and has a unit for leading coefficient, $\phi$ is invertible on
$N$ and $Q(\phi^{-1})=0$ on $N$.  Thus there is a well-defined map
$D_Y\longrightarrow Y$ given by 
\begin{eqnarray*}
  \OO(Y')[T]/(Q(T)) &\longrightarrow  & \mathbf{T}(Y) \\
  T & \longmapsto & \phi^{-1}
\end{eqnarray*}
on the underlying affinoid algebras.  Now one simply glues over
varying $Y$ to obtain a space $D$ equipped with a map
$D\longrightarrow Z$.

\begin{defi}
  Let $L/K$ be a complete and discretely-valued extension of $K$.  A
  pair $(\kappa,\gamma)$ consisting of a point $\kappa\in\scr{W}(L)$
  and a map $\mathbf{T}\longrightarrow L$ is called an
  \emph{$L$-valued system of eigenvalues} of $\mathbf{T}$ acting on
  the $\{M_X\}$ if there exists an admissible affinoid open
  $X\subseteq \scr{W}$ containing $\kappa$ and a nonzero form $f\in
  M_X\otimes_{\OO(X)}L$ such that $(t_X\otimes 1)f = \gamma(t)f$ for all
  $t\in \mathbf{T}$.  The system of eigenvalues $(\kappa,\gamma)$ is
  called \emph{$\phi$-finite} if in addition $\gamma(\phi)\neq 0$.  
\end{defi}
Let $x$ be an $L$-valued point of $D$.  Let $\kappa_x$ denote the image
of $x$ in $\scr{W}$ and pick an admissible open $X\subseteq \scr{W}$
containing $\kappa$.  Then $x\in D_Y$ for some $Y\in \scr{C}(Z_X)$ so
we can associate to $x$ a map $\gamma_x:\mathbf{T}\longrightarrow L$.
The following is Lemma 5.9 of \cite{buzzardeigenvarieties}.
\begin{lemm}\label{eigenpoints}
  The association $x\longmapsto (\kappa_x,\gamma_x)$ is a well-defined
  bijection between $D(L)$ and the set of $\phi$-finite $L$-valued
  systems of eigenvalues of $\mathbf{T}$ acting on the $\{M_X\}$.  The
  image of the system of eigenvalues $(\kappa,\gamma)$ in $Z$ is
  $(\kappa,\gamma(\phi)^{-1})$.  
\end{lemm}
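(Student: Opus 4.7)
The plan is to produce explicit mutually inverse maps. Starting from an $L$-valued point $x$ of $D$, pick any admissible affinoid $X\subseteq \scr{W}$ containing $\kappa_x$ and any $Y\in\scr{C}(Z_X)$ with $x\in D_Y$. Then $x$ corresponds to a maximal ideal $\mathfrak{m}_x\subset \mathbf{T}(Y)$ with residue field embedded in $L$, and I take $\gamma_x$ to be the composite $\mathbf{T}\to \mathbf{T}(Y)\to L$. Since $\mathbf{T}(Y)$ is by construction a subalgebra of $\End_{\OO(Y')}(N)$, the module $N$ is a faithful finite $\mathbf{T}(Y)$-module, so the generalized eigenspace of $N\otimes_{\OO(Y')}L$ at $\mathfrak{m}_x$ is a nonzero direct summand on which $\mathbf{T}$ acts through the finite-dimensional local $L$-algebra $(\mathbf{T}(Y)\otimes_{\OO(Y')} L)_{\mathfrak{m}_x}$; its socle (the honest $\gamma_x$-eigenspace) is nonzero, and any vector there, pulled into $M_X\otimes_{\OO(X)} L$ via the summand inclusion coming from $M_X\widehat\otimes_{\OO(X)}\OO(Y')=N\oplus F$, exhibits $(\kappa_x,\gamma_x)$ as an eigenvalue system. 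The $\phi$-finiteness is automatic, since $Q(T)$ has a unit leading coefficient so $\phi$ is invertible on $N$ and $\gamma_x(\phi)\neq 0$. Independence of the choice of $Y$ (and, via the links, of $X$) is a bookkeeping check using the cocycle relation and the $\mathbf{T}$-equivariance of the transition maps $\alpha_{12}$.

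For the inverse, let $(\kappa,\gamma)$ be a $\phi$-finite system with eigenvector $f\in M_X\otimes_{\OO(X)}L$ and set $\lambda=\gamma(\phi)^{-1}\neq 0$. The key technical input is that $\lambda$ is a zero of the specialization of $P_X(T)$ along $\kappa$; this is the standard fact from the Serre--Riesz theory of compact operators on $p$-adic Banach modules that the nonzero eigenvalues of $\phi_X$ on $M_X\otimes L$ are exactly the reciprocals of the roots of the Fredholm determinant. Granting this, $(\kappa,\lambda)$ lies on $Z_X$, and by Theorem \ref{cover} belongs to some $Y\in\scr{C}(Z_X)$. After shrinking $Y'$ if necessary so that $\lambda$ is separated from the other zeros of $P_X$ above $\kappa$, the factorization $P_X=QQ'$ has $Q'(\phi)$ invertible on the $L$-specialization, forcing $f$ into the $N$-summand; the induced action of $\mathbf{T}$ on the line spanned by $f$ factors $\mathbf{T}(Y)\to L$ through a maximal ideal that defines a point $x\in D_Y$ with $(\kappa_x,\gamma_x)=(\kappa,\gamma)$.

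Injectivity is then easy: two points with the same $(\kappa,\gamma)$ have the same image $(\kappa,\gamma(\phi)^{-1})$ in $Z$, hence lie simultaneously in some common $D_Y$, where they yield the same map $\mathbf{T}(Y)\to L$; since $\mathbf{T}(Y)$ is generated as an $\OO(Y')$-algebra by the images of $\mathbf{T}$ and the $\OO(Y')$-part is pinned down by $\kappa$, their maximal ideals must coincide. The description of the image in $Z$ drops out of the definition of $D_Y\to Y$ as the map sending the coordinate $T$ on $Y$ to $\phi^{-1}$, so $x$ maps to the point with $T$-coordinate $\gamma_x(\phi)^{-1}$. I expect the main obstacle to be the Fredholm-theoretic statement that $\gamma(\phi)^{-1}$ is a root of the specialized $P_X$; this is where genuine input from the theory of compact operators on Banach modules over complete affinoid algebras is needed, whereas the remainder is essentially bookkeeping with the $N\oplus F$ decomposition and the gluing construction of $D$.
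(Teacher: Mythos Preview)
The paper does not actually prove this lemma: it simply records it as Lemma~5.9 of \cite{buzzardeigenvarieties} and moves on. So there is no in-paper argument to compare against; your sketch is essentially the standard proof one finds in Buzzard's paper, and it is correct in outline. The structure (produce an eigenvector from a point via the faithful action of $\mathbf{T}(Y)$ on $N$; conversely, use Fredholm theory to locate $(\kappa,\gamma(\phi)^{-1})$ on $Z$, find a $Y\in\scr{C}(Z_X)$ containing it, show the eigenvector lands in $N$, and read off a point of $D_Y$) is exactly right, as is your identification of the one genuine analytic input, namely that the nonzero spectrum of $\phi$ on $M_X\otimes_{\OO(X)}L$ is governed by the specialization of the Fredholm determinant.

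One small point worth tightening: when you pass from ``$N$ is faithful over $\mathbf{T}(Y)$'' to ``the generalized eigenspace in $N\otimes_{\OO(Y')}L$ at $\mathfrak{m}_x$ is nonzero'', faithfulness need not be preserved by the non-flat base change $\OO(Y')\to L$ directly. The clean way (and the way the paper itself argues later in Section~\ref{sec:nstar}) is to first pass to the flat completion $\widehat{\OO(Y')}_y$, where faithfulness persists and the decomposition into local factors at the points of the fiber makes each $N\otimes_{\mathbf{T}(Y)}\widehat{\mathbf{T}(Y)}_x$ nonzero; Nakayama then gives nonvanishing of the fiber, hence of the honest eigenspace. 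Also, your ``shrinking $Y'$'' step is harmless but unnecessary: once $(\kappa,\gamma(\phi)^{-1})\in Y$ you have $Q(\gamma(\phi)^{-1})=0$ in $L$, hence $Q^*(\phi)f=Q^*(\gamma(\phi))f=0$ directly, so $f$ already lies in the $N$-summand without any further refinement.
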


\section{The coherent sheaf $\scr{N}^*$}\label{sec:nstar}

Let $\{M_X\}$ be a system of Banach modules in the sense of
\cite{buzzardeigenvarieties} as above. We wish to construct a coherent
sheaf on $D$ whose fiber at a point is the linear dual of the
eigenspace for the system of eigenvalues corresponding to this point by
Lemma \ref{eigenpoints}.

Fix $X\subseteq \scr{W}$ and $Y=\Sp(A)\in \scr{C}(Z_X)$ with
connected image $Y'=\Sp(A')\subseteq X$.  Let $\mathbf{T}(Y)$ and
$N(Y)$ be as in the previous section.  We will build the sheaf
$\scr{N}^*$ by gluing the finite $\mathbf{T}(Y)$-modules
$\Hom_{A'}(N(Y),A')$.  For a general $Y\in \scr{C}(Z_X)$, we will
denote by $\mathbf{T}(Y)$ the product of the algebras obtained from
the various connected components of the image $Y'$ of $Y$ in $X$.  For
such $Y$ we will denote by $N(Y)$ the projective $\OO(Y')$-module given
by the product of the modules obtained in the previous section from
the various connected components of $Y'$.  Finally, for such $Y$,
$N(Y)^*$ will denote the $\OO(Y')$-linear dual of $N(Y)$, or
equivalently the product of the duals of the projective
modules corresponding to the various connected components of $Y'$.

\begin{lemm}\label{glueing}
  Let $Y_1,Y_2\in\scr{C}(Z_X)$.  Then $Y=Y_1\cap Y_2\in\scr{C}(Z_X)$
  and there is a canonical identification
  $$N(Y)^*\cong N(Y_i)^*\otimes_{\mathbf{T}(Y_i)} \mathbf{T}(Y).$$
\end{lemm}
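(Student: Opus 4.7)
The plan is to handle the two claims in turn. For membership $Y \in \scr{C}(Z_X)$, I would set $W = Y_1' \cap Y_2'$ (admissible affinoid open in $X$) and restrict the idempotents $e_i \in \OO(\pi^{-1}(Y_i'))$ cutting out $Y_i$ to idempotents $\bar{e}_i$ on $\pi^{-1}(W)$. A direct calculation shows that $\bar{e}_1 + \bar{e}_2 - \bar{e}_1\bar{e}_2$ is idempotent with zero locus $Y$. The image $\pi(Y)$ is the union of those connected components of $W$ over which the corresponding factor of the Fredholm determinant has positive degree (equivalently, where $Y$ has nonempty fiber); as a union of connected components of an admissible affinoid open this is itself admissible affinoid open, and $\pi$ restricts to a finite morphism onto it.

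For the identification of $N(Y)^*$, fix $i$ (say $i = 1$), set $A_1' = \OO(Y_1')$ and $A' = \OO(Y')$ for $Y' = \pi(Y) \subseteq Y_1'$, and note that $Y_1 \cap \pi^{-1}(Y')$ decomposes as a disjoint union $Y \sqcup Y^\flat$ corresponding to a coprime factorization $Q_{Y_1}|_{A'} = Q_Y \cdot Q_{Y^\flat}$. The uniqueness of the spectral decomposition guarantees a $\mathbf{T}$-equivariant direct sum
$$N(Y_1)\widehat{\otimes}_{A_1'} A' \cong N(Y) \oplus N',$$
with $N'$ the projective $A'$-module attached to $Y^\flat$; since $N(Y_1)$ is finitely generated projective over $A_1'$, duality commutes with this base change and gives $N(Y_1)^*\otimes_{A_1'} A' \cong N(Y)^* \oplus (N')^*$. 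Moreover the spectral projector onto $N(Y)$ is a polynomial in $\phi$ with coefficients in $A'$ (by Bezout applied to $Q_Y^*, Q_{Y^\flat}^*$), so it lies in $\mathbf{T}(Y_1)\otimes_{A_1'} A'$; combined with flatness of $A_1' \to A'$ and faithfulness of the $\mathbf{T}(Y_1)$-action on $N(Y_1)$, this implies $\mathbf{T}(Y_1)\otimes_{A_1'} A' \cong \mathbf{T}(Y) \times \mathbf{T}'$, where $\mathbf{T}'$ is the Hecke algebra on $N'$. One then computes
$$N(Y_1)^* \otimes_{\mathbf{T}(Y_1)} \mathbf{T}(Y) \cong \bigl(N(Y_1)^*\otimes_{A_1'} A'\bigr) \otimes_{\mathbf{T}(Y_1)\otimes_{A_1'} A'} \mathbf{T}(Y) \cong \bigl(N(Y)^* \oplus (N')^*\bigr)\otimes_{\mathbf{T}(Y)\times \mathbf{T}'}\mathbf{T}(Y) \cong N(Y)^*,$$
which yields the claim.

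The main obstacle I anticipate is the clean justification of the identification $\mathbf{T}(Y_1)\otimes_{A_1'} A' \cong \mathbf{T}(Y) \times \mathbf{T}'$: here one must pass from the faithful injection $\mathbf{T}(Y_1) \hookrightarrow \End_{A_1'}(N(Y_1))$ to its flat base change and then identify the image inside $\End_{A'}(N(Y)) \times \End_{A'}(N')$ using the spectral idempotent constructed above. Everything else amounts to bookkeeping around base change of finitely generated projective modules and the uniqueness of the spectral decomposition underlying Buzzard's construction.
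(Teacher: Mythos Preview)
Your proposal is correct and follows essentially the same route as the paper: reduce to base-changing along $A_1' \to A'$, decompose $N(Y_1)\otimes_{A_1'}A'$ via the coprime factorization of $Q_{Y_1}|_{A'}$, and then pick off the correct summand by tensoring up to the target. The only real difference is organizational: the paper cites Buzzard's Lemma~5.2 both for $Y\in\scr{C}(Z_X)$ and for the Hecke-algebra identification $\mathbf{T}(Y_i)\otimes_{A_i}A\cong\mathbf{T}(Y)$ (working over $A_i=\OO(Y_i)$ rather than $A_1'$), and then proves only $N(Y)^*\cong N(Y_i)^*\otimes_{A_i}A$, whereas you supply the idempotent and the Hecke-algebra product decomposition $\mathbf{T}(Y_1)\otimes_{A_1'}A'\cong\mathbf{T}(Y)\times\mathbf{T}'$ by hand via the B\'ezout/spectral-projector argument. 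Your anticipated ``obstacle'' is handled exactly as you suggest (flatness gives injectivity of $\mathbf{T}(Y_1)\otimes_{A_1'}A'\hookrightarrow\End_{A'}(N(Y_1)\otimes_{A_1'}A')$, and the idempotent forces the image to split as the full product); the paper simply avoids this by quoting Buzzard. One small point you glossed over that the paper handles explicitly is the reduction to connected images $Y_i'$ and $Y'$---this is harmless because everything factors over connected components, but it is worth saying.
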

\begin{proof}
  Let $A_i$ denote the affinoid algebra of $Y_i$ or $i=1,2$ and $A$
  denote that of $Y_1\cap Y_2$.  Lemma 5.2 of
  \cite{buzzardeigenvarieties} gives the first assertion as well as
  the fact that restriction induces an
  isomorphism $$\mathbf{T}(Y_i)\otimes_{A_i}A
  \stackrel{\sim}{\longrightarrow} \mathbf{T}(Y).$$
  Thus, it suffices to construct an identification 
\begin{equation}\label{ident2}
  N(Y)^*\cong
  N(Y_i)^*\otimes_{A_i}A.
\end{equation}

Each object in this purported isomorphism of $A$-algebras breaks up
into factors associated to the various connected components of the
images of $Y_i$ and $Y$ in $X$.  Let $Y'$ be a connected component
of the image of $Y=Y_1\cap Y_2$, and let $Y_i'$ denote the unique
connected component of the image of $Y_i$ containing $Y'$.  Now
re-define $Y_i$ to be its part lying over $Y_i'$ for $i=1,2$.  Each
$Y_i$ lies in $\scr{C}(Z_X)$ with connected image $Y_i'$ so the
preimages $Y_i\cap \pi^{-1}(Y') = Y_i\times_{Y_i'}Y'$ lie in
$\scr{C}(Z_X)$ with image $Y'$.  Now re-define $Y$ to be the
intersection of these preimages, so that $Y$ is also in
$\scr{C}(Z_X)$ with image $Y'$, as it is the intersection of two
unions of connected components of $\pi^{-1}(Y')$.

Thus it suffices to prove (\ref{ident2}) with these new $Y_i$ and
$Y$ where now each image $Y_i'$ and $Y'$ is connected.  In the
interest of notational brevity we adopt the convention that objects
associated with $Y_i$ carry a subscript $i$ and objects associated
with $Y$ carry no subscript at all.  Also, we will let $A_i$ denote
the affinoid algebra of $Y_i$, let $A$ denote that of $Y$, let
$A_i'$ denote that of $Y_i'$, and let $A'$ denote that of $Y'$.

Note that
\begin{eqnarray}\label{tensormess}
  N_i^*\otimes_{A_i}A &\cong &
  (N_i^*\otimes_{A_i}(A_i\otimes_{A_i'}A'))\otimes_{A_i\otimes_{A_i'}A'}A
  \\\nonumber  &\cong& (N_i^*\otimes_{A_i'}A')\otimes_{A_i\otimes_{A_i'}A'}A
\end{eqnarray}
and since $N_i$ is projective over $A_i'$ we
have 
\begin{equation}\label{dualhom}
N_i^*\otimes_{A_i'}A' = \Hom_{A_i'}(N_i,A_i')\otimes_{A_i'}A'
\cong \Hom_{A'}(N_i\otimes_{A_i'}A',A')
\end{equation}

  By definition, we have isomorphisms
  \begin{equation}\label{dumbisom}
    M\cong M_i\widehat{\otimes}_{A_i'}A'
  \end{equation}
  that are equivariant with respect to the Hecke actions on both
  sides.  Tensoring the decomposition $M_i\cong N_i\oplus F_i$ with
  $A'$ gives a decomposition 
  \begin{equation}\label{tensoreddecomp}
    M_i\widehat{\otimes}_{A_i'}A' \cong
    (N_i\otimes_{A_i'}A')\oplus (F_i\widehat{\otimes}_{A_i'}A')
  \end{equation}
  in which the first summand is the kernel of $Q_i^*(\phi)$.  Note
  that we are abusing notation slightly here by identifying $Q_i$ with
  its image in $A'[T]$. This sort of abuse will persist throughout the
  argument.  The equivariance of (\ref{dumbisom}) identifies $N$ with
  the submodule of (\ref{tensoreddecomp}) on which $Q^*(\phi)=0$.
  Since $Y$ is a union of connected components of
  $Y_i\times_{Y_i'}Y'$, there is a factorization $Q_i =
  Q\widetilde{Q}_i$ into relatively prime factors in $A'[T]$ with
  constant term $1$.  In particular, (\ref{dumbisom}) identifies $N$
  with the submodule of $N_i\otimes_{A_i'}A'$ on which $Q^*(\phi)=0$
  since $Q^*|Q_i^*$.
  
  This factorization of $Q_i$ induces a canonical decomposition
  $$A_i\otimes_{A_i'}A'\cong A'[T]/(Q_i(T))\cong A'[T]/(Q(T))\times
  A'[T]/(\widetilde{Q}_i(T)) \cong A\times \tilde{A}_i$$
  where
  $\tilde{A}_i\cong A'[T]/(\widetilde{Q}_i(T))$ is the affinoid
  algebra of $Y_i\times_{Y_i'}Y' \setminus Y$.  This in turn
  decomposes the $A_i\otimes_{A_i'}A'$-module $N_i\otimes_{A_i'}A'$
  into the direct sum of the kernels of $Q(\phi^{-1})$ and
  $\widetilde{Q}_i(\phi^{-1})$ (equivalently, the kernels of
  $Q^*(\phi)$ and $\widetilde{Q}^*(\phi)$).  As the former is
  naturally identified with $N$ via the isomorphism (\ref{dumbisom}),
  we have a decomposition
  $$N_i\otimes_{A_i'}A'\cong N\oplus \tilde{N}_i$$ which, with
  (\ref{dualhom}), gives $$N_i^*\otimes_{A_i'}A'\cong N^*\oplus
  \widetilde{N}_i^*.$$ Combining this with (\ref{tensormess}) yields an
  isomorphism
  $$N_i^*\otimes_{A_i}A\cong (N^*\otimes_{A_i\otimes_{A_i'}A'}A)\oplus
  (\widetilde{N}^*_i\otimes_{A_i\otimes_{A_i'}A'}A).$$
  As $A$ acts as
  zero on $\widetilde{N}_i$, the second summand vanishes.  By contrast,
  the action of $A_i\otimes_{A_i'}A'$ on $N$ factors through $A$, so
  the first factor is simply identified with $N^*$, and we have
  exhibited a canonical isomorphism $N_i^*\otimes_{A_i}A\cong N^*$.
\end{proof}

The isomorphisms exhibited in this proof satisfy the cocycle condition
because those of Lemma 5.2 of \cite{buzzardeigenvarieties} do (as
observed in the comments following that lemma), as do the isomorphisms
(\ref{dumbisom}) for trivial reasons.  Thus we may glue the $N(Y)^*$
for varying $Y\in \scr{C}(Z_X)$ to obtain a coherent sheaf on $D(X)$.
A similar argument to that in Lemma \ref{glueing} using the links in
place of the trivial isomorphisms (\ref{dumbisom}) and Lemma 5.6 of
\cite{buzzardeigenvarieties} in place of Lemma 5.2 of
\cite{buzzardeigenvarieties} allows us to glue these sheaves for
varying $X\subseteq\scr{W}$ to obtain a coherent sheaf $\scr{N}^*$ on
all of $D$.

Let us now compute the completed stalks and fibers of $\scr{N}^*$.
Fix an admissible open $X\subseteq \scr{W}$ and
$Y=\Sp(A)\in\scr{C}(Z_X)$ with connected image $Y'=\Sp(A')\subseteq
X$.  In the interest of notational brevity we will drop the $Y$ from
the notation of the module $N$ and the algebra $\mathbf{T}(Y)$ and
refer to them simply as $N$ and $\mathbf{T}$.  The conflict in
notation with the \emph{set} $\mathbf{T}$ should cause no confusion.
For each $y\in Y'$ we have
\begin{equation}\label{tdecomp}
  \mathbf{T}\otimes_{A'}\widehat{A}'_y
  \cong \prod_x \widehat{\mathbf{T}}_x
\end{equation}
where the product is taken over the fiber of the finite map
$D_Y\longrightarrow Y'$ over $y$.  The module
$N\otimes_{A'}\widehat{A}'_y$ is finite and projective over
$\widehat{A}'_y$, and therefore free.  This module moreover carries a
faithful action of $\mathbf{T}\otimes_{A'}\widehat{A}'_y$ since
$A'\longrightarrow \widehat{A}'_y$ is flat, and therefore breaks up as 
\begin{equation}\label{ndecomp}
  N\otimes_{A'}\widehat{A}'_y\cong \prod_x
  N\otimes_{\mathbf{T}}\widehat{\mathbf{T}}_x
\end{equation}
and each factor of (\ref{tdecomp}) acts faithfully on the
corresponding factor of (\ref{ndecomp}). 
It follows that 
$$\Hom_{A'}(N,A')\otimes_{A'}\widehat{A}'_y \cong \prod_x
\Hom_{\widehat{A}'_y}(N\otimes_{\mathbf{T}}
\widehat{\mathbf{T}}_x,\widehat{A}'_y).$$ Extending scalars of either
of these modules from $\mathbf{T}\otimes_{A'}\widehat{A}'_y$ to
$\widehat{\mathbf{T}}_x$ for a particular $x$ simply picks out the
factor corresponding to $x$, so we have canonical identifications
\begin{eqnarray*}
\Hom_{A'}(N,A')\otimes_{\mathbf{T}}\widehat{\mathbf{T}}_x &\cong& 
(\Hom_{A'}(N,A')\otimes_{\mathbf{T}}(\mathbf{T}\otimes_{A'}\widehat{A}'_y))
\otimes_{\mathbf{T}\otimes_{A'}\widehat{A}'_y}\widehat{\mathbf{T}}_x
\\ &\cong&
(\Hom_{A'}(N,A')\otimes_{A'}\widehat{A}'_y)\otimes_{\mathbf{T}\otimes_{A'}
  \widehat{A}'_y}\widehat{\mathbf{T}}_x 
\\ &\cong& \Hom_{\widehat{A}'_y}(N\otimes_{\mathbf{T}}
\widehat{\mathbf{T}}_x,\widehat{A}'_y),
\end{eqnarray*}
which completes the description of the completed stalk of $\scr{N}^*$ at
$x$.  

We now turn to the fiber.  Again let $y\in Y'$ and let $L/K$ be a
finite extension containing the residue field of $y$.  Further
extending scalars to $L$ in (\ref{tdecomp}) and (\ref{ndecomp}) we
arrive at
\begin{equation}\label{tdecompfiber}
\mathbf{T}\otimes_{A'}L \cong \prod_x
\widehat{\mathbf{T}}_x\otimes_{\widehat{A}'_y}L
\end{equation}
and 
\begin{equation}\label{ndecompfiber}
N\otimes_{A'}L
\cong \prod_x
(N\otimes_{\mathbf{T}}\widehat{\mathbf{T}}_x)\otimes_{\widehat{A}'_y}L
\end{equation}
If we fix an $L$-valued point $\lambda:\mathbf{T}\longrightarrow L$ of
$D_Y$ corresponding to a point $x$ in the fiber over $y$ (perhaps
after enlarging $L$), then the image of this point in $Y'$ gives a map
$\widehat{A}'_y\longrightarrow L$ and we may decompose as above.  The
following lemma characterizes the factor corresponding to $x$ in terms
of $\lambda$.
\begin{lemm}\label{nilpotent}
  There exists a positive integer $e$ such that $(t-\lambda(t))^e=0$
  on the factor corresponding to $x$ in (\ref{tdecompfiber}).
  Moreover, for $x'\neq x$ in the fiber over $y$, there exists $t\in
  \mathbf{T}$ such that $t-\lambda(t)$ is invertible on the factor
  corresponding to $x'$.  
\end{lemm}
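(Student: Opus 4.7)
The plan is to leverage the finite Artinian structure of the base change together with the fact that $\mathbf{T}$ generates the finite algebra $\mathbf{T}(Y)$ as an $\OO(Y')$-algebra. After enlarging $L$ further if necessary, I would refine the decomposition (\ref{tdecompfiber}) into a product of local Artinian $L$-algebras with residue field $L$, indexed by the $L$-valued points of $D_Y$ lying over $y$; the factor singled out by the chosen $\lambda$ (and so by $x$) is such a local Artinian ring $R_\lambda$ with maximal ideal $\mathfrak{m}$ equal to the kernel of the induced residue map $R_\lambda\to L$.

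For the first claim I would note that, since $R_\lambda$ is an Artinian local $L$-algebra, $\mathfrak{m}^e=0$ for some positive integer $e$, and for any $t\in\mathbf{T}$ the image of $t-\lambda(t)$ in $R_\lambda$ has trivial residue in $L$, hence lies in $\mathfrak{m}$. Therefore $(t-\lambda(t))^e=0$, with the same $e$ working uniformly for every $t$.

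For the second claim I would fix $x'\neq x$ in the fiber and let $\mu:\mathbf{T}\to L$ correspond to an $L$-valued point lying over $x'$. The kernels of $\mu$ and $\lambda$ on $\mathbf{T}(Y)$ are distinct maximal ideals (as $x\neq x'$), so $\mu$ and $\lambda$ are distinct as $\OO(Y')$-algebra maps $\mathbf{T}(Y)\to L$. Both restrict to the same map on $\OO(Y')$, factoring through the residue field at $y$, and $\mathbf{T}$ generates $\mathbf{T}(Y)$ as an $\OO(Y')$-algebra, so there must exist some $t\in\mathbf{T}$ with $\mu(t)\neq\lambda(t)$. In the local factor $R_\mu$, the image of $t-\lambda(t)$ has nonzero residue in $L$, hence is a unit.

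The main obstacle I anticipate is the purely notational matter of interpreting ``the factor corresponding to $x$'' when $\widehat{\mathbf{T}}_x\otimes_{\widehat{A}'_y}L$ is not itself local --- which can happen when $\kappa(x)/\kappa(y)$ admits multiple embeddings into $L$, so that one scheme-theoretic point in the fiber produces several $L$-valued points. Once one commits to the refined decomposition into local pieces (and, for the second part, runs the separating argument piece by piece on the summands of the $x'$-factor), both assertions reduce to standard facts about finite-dimensional local $L$-algebras combined with the separating property of the generating set $\mathbf{T}$.
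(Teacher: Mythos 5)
Your proof is correct and follows essentially the same strategy as the paper: for the first claim, you use that the maximal ideal of the finite local $L$-algebra is nilpotent (the paper reaches the same conclusion by applying the Krull intersection theorem to get $m_x^e\subseteq m_y\widehat{\mathbf{T}}_x$ and then tensoring with $L$), and for the second claim you separate $x'$ from $x$ using exactly the fact encoded in Lemma \ref{eigenpoints} that distinct points of $D$ give distinct systems of eigenvalues restricted to $\mathbf{T}$. The one genuine addition in your write-up is that you explicitly flag that $\widehat{\mathbf{T}}_x\otimes_{\widehat{A}'_y}L$ need not be local when $k(x)$ properly contains $k(y)$ and propose refining to local Artinian $L$-algebras; the paper silently treats these factors as local, so your observation is a real improvement in precision. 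One caveat worth recording: after refining the $x'$-factor into several local pieces, running the separating argument ``piece by piece'' produces a possibly different $t$ for each piece, so it does not by itself yield the single $t$ asserted in the lemma; what it does give is that the ideal generated by $\{t-\lambda(t):t\in\mathbf{T}\}$ is the unit ideal on the $x'$-factor, which is exactly what is needed for Corollary \ref{eigenspaceinsummand}.
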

\begin{proof}
  Denote the maximal ideals of $\widehat{A}'_y$ and
  $\widehat{\mathbf{T}}_x$ by $m_y$ and $m_x$, respectively.  Since
  $\widehat{\mathbf{T}}_x/m_y\widehat{\mathbf{T}}_x$ is
  finite-dimensional Noetherian local algebra over the field
  $\widehat{A}'_y/m_y$, the Krull intersection theorem implies that
  there exists an integer $e$ such that $m_x^e$ vanishes in this
  algebra.  That is to say, $m_x^e\subseteq
  m_y\widehat{\mathbf{T}}_x$.  Since $t-\lambda(t)\in m_x$ for all $t$,
  the first claim follows immediately.
  
  Now pick $x'\neq x$ in the fiber over $y$.  Pick a finite
  extension $L'/L$ containing the residue field of the local algebra
  $\widehat{\mathbf{T}}_{x'}\otimes_{\widehat{A}'_y}L$ and let
  $$\mu:\widehat{\mathbf{T}}_{x'}\otimes_{\widehat{A}'_y}L\longrightarrow
  L'$$ realize this inclusion.  Suppose that, for all
  $t\in\mathbf{T}$, we have $\mu(t-\lambda(t))=0$.  Then
  $\mu(t)=\lambda(t)$ for all $t$ and it follows from Lemma
  \ref{eigenpoints} that $x'=x$.  Thus there exists $t$ such that
  $\mu(t-\lambda(t))\neq 0$, which implies that $t-\lambda(t)$ is a
  invertible element of the local algebra
  $\widehat{\mathbf{T}}_x\otimes_{\widehat{A}'_y}L$.
\end{proof}

Let $x\in D$ and let $\lambda$ be the corresponding system of
eigenvalues.  Pick $X\subseteq \scr{W}$ containing the image of
$x$ and let
$$M_\lambda = \{ f\in M\widehat{\otimes}_{\OO(X)}L\ |\ (t\otimes 1)f =
\lambda(t)f\ \mbox{for all}\ t\in\mathbf{T}\}$$ be the corresponding
eigenspace.  Note that this space is independent of the choice of $X$
in the sense that the links that are part of the data of the system
$\{M_X\}$ identify the eigenspace obtained from any two choices of
$X$ containing the image of $x$.  Note also that $x\in D_Y$ implies
that $M_\lambda$ in fact lies in the summand $N\otimes_{A'}L$ of
$M\widehat{\otimes}_{\OO(X)}L$.  The following is an immediate
consequence of Lemma \ref{nilpotent}.

\begin{coro}\label{eigenspaceinsummand}
  In the decomposition (\ref{ndecompfiber}), the eigenspace
  $M_\lambda$ is contained in the summand
  $(N\otimes_{\mathbf{T}}\widehat{\mathbf{T}}_x)\otimes_{\widehat{A}'_y}L$
  of $N\otimes_{A'}L$.
\end{coro}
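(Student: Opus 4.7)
The plan is to take an arbitrary $f \in M_\lambda$, decompose it according to the product decomposition (\ref{ndecompfiber}) as $f = \sum_{x'} f_{x'}$, and show that all components with $x' \neq x$ vanish. The key point is that the Hecke action respects the decomposition (\ref{ndecompfiber}) because the factorization in (\ref{tdecompfiber}) is compatible with (\ref{ndecompfiber}) componentwise: for each $t \in \mathbf{T}$, the operator $t \otimes 1$ on $N \otimes_{A'} L$ decomposes as a product of its actions on each factor $(N \otimes_\mathbf{T} \widehat{\mathbf{T}}_{x'}) \otimes_{\widehat{A}'_y} L$.

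Now fix $x' \neq x$ in the fiber over $y$. Since $f \in M_\lambda$, we have $(t - \lambda(t))f = 0$ for every $t \in \mathbf{T}$, and applying the componentwise action gives $(t - \lambda(t)) f_{x'} = 0$. By the second assertion of Lemma \ref{nilpotent}, we may choose $t \in \mathbf{T}$ such that $t - \lambda(t)$ acts invertibly on the factor $(N \otimes_\mathbf{T} \widehat{\mathbf{T}}_{x'}) \otimes_{\widehat{A}'_y} L$ (here one uses that the action of $\mathbf{T}$ on this factor is the same as the action of the corresponding factor of $\mathbf{T} \otimes_{A'} L$ on it, since the factor of $N$ is a faithful module over the factor of $\mathbf{T} \otimes_{A'} L$ by the discussion surrounding (\ref{ndecomp})). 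Invertibility of $t - \lambda(t)$ on this factor, combined with $(t-\lambda(t)) f_{x'} = 0$, forces $f_{x'} = 0$.

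Since $x'$ was arbitrary among the other points in the fiber, the component of $f$ in every summand except the one indexed by $x$ vanishes, and hence $f$ lies in the single summand $(N \otimes_\mathbf{T} \widehat{\mathbf{T}}_x) \otimes_{\widehat{A}'_y} L$ as claimed. No significant obstacle arises here; the corollary is really just a formal consequence of the separation of points statement in Lemma \ref{nilpotent} together with the fact that the Hecke action is diagonalized by (\ref{tdecompfiber}), which is why the excerpt labels it as immediate. The only minor care needed is to remember that although the first part of Lemma \ref{nilpotent} says $t - \lambda(t)$ is topologically nilpotent on the $x$-factor, what we need for the other factors is the invertibility coming from the second assertion.
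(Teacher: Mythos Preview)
Your proof is correct and is exactly the argument the paper has in mind when it calls the corollary an immediate consequence of Lemma \ref{nilpotent}: decompose $f$ according to (\ref{ndecompfiber}), use the second assertion of the lemma to kill each component $f_{x'}$ with $x'\neq x$, and conclude. One tiny quibble: in your final sentence, the first part of Lemma \ref{nilpotent} asserts genuine nilpotence of $t-\lambda(t)$ (some power is zero), not merely topological nilpotence, though this plays no role in the argument anyway.
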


Using the above description of the completed stalk of $\scr{N}^*$ at
$x$, we can compute the fiber by extending scalars via
$\widehat{\mathbf{T}}_x\longrightarrow L$ to get
\begin{eqnarray*}
\lefteqn{
\Hom_{\widehat{A}'_y}(N\otimes_{\mathbf{T}}\widehat{\mathbf{T}}_x,
\widehat{A}'_y)
\otimes_{\widehat{\mathbf{T}}_x}L } && \\ &\cong &
(\Hom_{\widehat{A}'_y}(N\otimes_{\mathbf{T}}\widehat{\mathbf{T}}_x,
\widehat{A}'_y)\otimes_{\widehat{\mathbf{T}}_x}(\widehat{\mathbf{T}}_x
\otimes_{\widehat{A}'_y}L))\otimes_{\widehat{\mathbf{T}}_x
  \otimes_{\widehat{A}'_y}L}L\\ & \cong&
(\Hom_{\widehat{A}'_y}(N\otimes_{\mathbf{T}}\widehat{\mathbf{T}}_x,
\widehat{A}'_y)\otimes_{\widehat{A}'_y}L)\otimes_{\widehat{\mathbf{T}}_x
  \otimes_{\widehat{A}'_y}L}L \\ &\cong &
\Hom_L((N\otimes_{\mathbf{T}}\widehat{\mathbf{T}}_x)
\otimes_{\widehat{A}'_y}L,L)\otimes_{\widehat{\mathbf{T}}_x
  \otimes_{\widehat{A}'_y}L}L 
\end{eqnarray*}
where the last isomorphism follows because
$N\otimes_{\mathbf{T}}\widehat{\mathbf{T}}_x$ is free of finite rank
over $\widehat{A}'_y$.  Now by Corollary \ref{eigenspaceinsummand},
$M_\lambda$ is contained in
$(N\otimes_\mathbf{T}\widehat{\mathbf{T}}_x)\otimes_{\widehat{A}'_y}L$, and
the following lemma shows that the restriction map induces an isomorphism
$$\Hom_L((N\otimes_{\mathbf{T}}\widehat{\mathbf{T}}_x)
\otimes_{\widehat{A}'_y}L),L)\otimes_{\widehat{\mathbf{T}}_x
  \otimes_{\widehat{A}'_y}L}L \stackrel{\sim}{\longrightarrow}
\Hom_L(M_\lambda,L).$$ 

\begin{lemm}
  Let $V$ be a finite-dimensional vector space over a field $L$
  equipped with the action of a commutative $L$-algebra $\scr{A}$ and let
  $\lambda:\scr{A}\longrightarrow L$ be an $L$-algebra homomorphism.
  Then the restriction map induces an isomorphism
  $$V^*\otimes_{\scr{A}}L\longrightarrow 
  (V_\lambda)^*$$ where $V_\lambda$ is the $\lambda$-eigenspace of $V$
  for the action of $\scr{A}$.  
\end{lemm}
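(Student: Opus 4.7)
The plan is to verify well-definedness of the restriction map, prove surjectivity directly, and then establish injectivity via a dimension count coming from tensor-hom adjunction.

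For well-definedness: given $\phi \in V^*$, $a \in \scr{A}$, and $v \in V_\lambda$, we have $(a\phi)(v) = \phi(av) = \lambda(a)\phi(v)$, so $((a-\lambda(a))\phi)|_{V_\lambda} = 0$. Hence the restriction map $V^* \to V_\lambda^*$ vanishes on the ideal $(\ker\lambda)V^*$ and so factors through $V^*\otimes_{\scr{A}}L = V^*/(\ker\lambda)V^*$.

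For surjectivity I would pick an $L$-linear complement $W$ to the finite-dimensional subspace $V_\lambda \subseteq V$ and, given any $\psi \in V_\lambda^*$, extend it by zero on $W$ to produce $\tilde\psi \in V^*$ whose class restricts back to $\psi$.

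For injectivity, given surjectivity, it suffices to verify the dimension identity $\dim_L V^*\otimes_{\scr{A}}L = \dim_L V_\lambda$. I would obtain this by computing the $L$-linear dual of the left-hand side via adjunction:
$$\Hom_L(V^*\otimes_{\scr{A}}L,\, L) \;\cong\; \Hom_{\scr{A}}(V^*,\, L),$$
where $L$ on the right carries the $\scr{A}$-module structure induced by $\lambda$. The finite-dimensionality of $V$ gives $V \cong V^{**}$, under which any $L$-linear $f: V^* \to L$ corresponds to evaluation $\mathrm{ev}_v$ at some $v \in V$. The $\scr{A}$-linearity condition $f(a\phi) = \lambda(a)f(\phi)$ then translates to $\phi(av) = \lambda(a)\phi(v)$ for all $\phi$, which forces $av = \lambda(a)v$, i.e.\ $v \in V_\lambda$. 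Thus $\Hom_L(V^*\otimes_{\scr{A}}L,\,L) \cong V_\lambda$, yielding the desired dimension identity and therefore injectivity.

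The main point requiring care is the adjunction step, where one must properly track the two $\scr{A}$-module structures in play—the one on $V^*$ inherited from $V$ and the one on $L$ coming from $\lambda$; once that bookkeeping is sorted out, the rest of the argument is routine linear algebra over a field.
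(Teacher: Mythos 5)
Your proof is correct and rests on the same underlying idea as the paper's: both arguments pass to the dual of $V^*\otimes_{\scr{A}}L$ using $V\cong V^{**}$ and the surjection $V^*\twoheadrightarrow V^*\otimes_{\scr{A}}L$ (your tensor-hom adjunction is just the explicit form of this), and both identify the result with $V_\lambda$ by the evaluation argument. The only organizational difference is that the paper exhibits the dual map $V_\lambda\to (V^*\otimes_{\scr{A}}L)^*$ and a two-sided inverse directly, whereas you split the work into a surjectivity check (extension by zero off an $L$-linear complement) plus a dimension count; this is a minor repackaging of the same proof.
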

\begin{proof}
  First observe that the indicated map is well-defined since by
  definition $\scr{A}$ acts on $V_\lambda$ via $\lambda$.  It suffices
  to check that the dual map $$V_\lambda\longrightarrow
  (V^*\otimes_{\scr{A}}L)^*$$ is an isomorphism.  The surjection
  $$\scr{A}\longrightarrow L$$ induces a surjection
  $$V^*\longrightarrow V^*\otimes_{\scr{A}}L.$$ Dualizing we arrive at
  an injection $$(V^*\otimes_{\scr{A}}L)^*\longrightarrow V.$$ It is
  clear that this injection has image in $V_\lambda$ and it is easy to
  check that the map so obtained is the inverse of the dual of the map
  in the statement.
\end{proof}

The upshot is that we have constructed an identification
\begin{equation}\label{fibers}
\scr{N}^*(x) = \scr{N}^*\otimes_{\OO_D}L \cong \Hom_L(M_\lambda,L).
\end{equation}

\section{Comments on the Shimura lifting and irreducible components}\label{sec:shimura}

In this section, we recall the main constructions and results of the
author's papers \cite{hieigencurve} and \cite{rigidshimura}.  Fix an
odd prime $p$ and a positive integer $N$ that is relatively prime to
$p$.  Let $D(N)$ denote the tame level $N$ eigencurve parameterizing
finite-slope systems of eigenvalues of the Hecke operators $T_\ell$
for $\ell\nmid Np$, $U_p$, and the diamond operators $\ip{d}_{N}$ for
$d\in (\Z/N\Z)^\times$, acting on cuspidal modular forms of integral
weight and tame level $N$.  Similarly, let $\widetilde{D}(4N)$ denote
the eigencurve parameterizing finite-slope systems of eigenvalues of
the Hecke operators $T_{\ell^2}$ for $\ell\nmid 2Np$, $U_{p^2}$, and
the diamond operators $\ip{d}_{4N}$ for $d\in (\Z/4N\Z)^\times$,
acting on cuspidal modular forms of half-integral weight and tame
level $4N$.  The constructions of these objects is a straightforward
application of the machinery in Section \ref{buzzmachine} once the
spaces of forms and operators are defined.  The reader can consult
\cite{buzzardeigenvarieties} and \cite{hieigencurve} for the details.

\begin{rema}
  Strictly speaking, the constructions of \cite{hieigencurve} and
  \cite{rigidshimura} outlined below were carried out in the case
  where Hecke eigenconditions are imposed at \emph{all} primes.
  However, the constructions contained in these papers adapt very
  easily to the present situation where such conditions are imposed
  only for $\ell\nmid 2N$.
\end{rema}

In \cite{rigidshimura}, the author constructs a map $\mathrm{Sh}$ on
underlying reduced rigid spaces fitting into a diagram
$$\xymatrix{ \widetilde{D}(4N)_\red \ar[r]^{\mathrm{Sh}}\ar[d] &
  D(2N)_\red\ar[d] \\ \scr{W}\ar[r]^{\kappa\mapsto \kappa^2} &
  \scr{W}}$$ that interpolates the classical Shimura lifting at
classical points in the sense that it preserves the system of
eigenvalues of Hecke and diamond operators in the evident sense.  This
map takes each irreducible component of $\widetilde{D}(4N)_\red$
isomorphically onto an irreducible component of $D(2N)_\red$.

Recall from \cite{hieigencurve} that the weight character book-keeping
in half-integral weight is such that classical weight $k/2$
corresponds to the weight character $\kappa(t)=t^{(k-1)/2}$, and that
the $p$-part of the nebentypus character of a classical form is
packaged as part of the weight character.  By a \emph{classical
  weight} we shall mean one of the form $\kappa(t) =
t^{(k-1)/2}\kappa'(t)$ where $k$ is an odd positive integer and
$\kappa'$ is of finite order.  A point $x\in\widetilde{D}(4N)$ with
corresponding system of eigenvalues $\lambda_x$ will be called
\emph{classical} if there exists a classical form having this system
of eigenvalues.  The point $x$ will be called \emph{strictly
  classical} if the entire $\lambda_x$-eigenspace consists of
classical forms.  Finally, a point $x$ lying over a classical weight
$\kappa(t)=t^{(k-1)/2}\kappa'(t)$ is called \emph{low-slope} if
$v(\lambda_x(U_{p^2}))< k-2$.  In \cite{hieigencurve} it is proven
that the low-slope points are strictly classical, and in
\cite{rigidshimura} it is shown that they comprise a Zariski-dense set
in $\widetilde{D}(4N)$.  Note that the notion of low-slope is
compatible with that used in the integral weight setting in the sense
that the Shimura lifting takes weight $k/2$ to weight $k-1$.

The system of eigenvalues given by a classical point in $D(N)$
corresponds to a unique normalized newform $f_0$ that generates an
automorphic representation $\pi = \otimes_v \pi_v$ of
$\mathrm{GL}_2(\A_\Q)$.  Given a property of this representation
(e.g. ``$\pi_\ell$ is Steinberg'') we will say that a point on $D(N)$
has this property if the corresponding representation does.
Similarly, given a point of $\widetilde{D}(4N)$, we will say that it
has such a property if its image under the map $\mathrm{Sh}$ does.

We now make a few observations about how some of these properties sit
on the eigencurve $D(N)$ (and hence on $\widetilde{D}(4N)$).  A more
comprehensive study of this topic can be found in the papers
\cite{paulin1} and \cite{paulin2} of Paulin.  We content ourselves
here with a some self-contained observations at square-free tame level
that will suffice for our purposes.

First, we look at $\ell=p$.  By the finite-slope condition, a
classical point in $D(N)$ is either irreducible principal series or
(an unramified twist of) Steinberg at $p$.  The Steinberg points with
weight $k-1$ satisfy $$\lambda_x(U_p)^2 =
\chi(p)^2p^{k-3},$$ where $\chi$ is the tame nebentypus of the point.
In particular, these points lie in the preimage of the discrete
analytic set $\mu_{N}(\C_p)\cdot p^\Z$ under the analytic function
\begin{eqnarray*}
  D(N) & \longrightarrow & \G_m \\
  x & \longmapsto & \lambda_x(U_p)^2
\end{eqnarray*}
The Steinberg at $p$ locus is accordingly analytic in $D(N)$
and cannot contain an irreducible component, and thus meets each
irreducible component in a proper analytic set.

Now suppose that $N$ is square-free and $\ell\mid N$.  The values of
$\lambda_x(\ip{d}_{N})$ are constant on connected components of
$D(N)$, so the tame nebentypus character is as well.  Let $x\in D(N)$
be a point with weight $\kappa(t)=t^{k-1}\kappa'(t)$ (with $\kappa'$
of finite order) and tame nebentypus character $\chi$.  Since $N$ is
square-free, a classical point $x\in D(N)$ is either irreducible
principal series or an unramified twist of Steinberg at $\ell$.  The
ramified irreducible principal series among these points are precisely
those lying on connected components with non-trivial nebentypus at
$\ell$.

Let us restrict now to the connected components with trivial
nebentypus at $\ell$.  We wish to determine the nature of the locus in
$D(N)$ that is Steinberg at $\ell$.  To do this we first consider the
auxiliary eigencurve $D(N)^\ell$ obtained by further imposing an
eigenvalue for $U_\ell$ (that is, by adding $U_\ell$ to the set
$\mathbf{T}$ used in the construction of $D(N)$).  Let $x\in
D(N)^\ell$ be a classical point of weight $\kappa(t) =
t^{k-1}\kappa'(t)$ and tame nebentypus $\chi$.  Then $x$ is in the
Steinberg locus if an only if $$\lambda_x(U_\ell)^2 =
\kappa'(\ell)\chi(\ell)\ell^{k-3} = \chi(\ell)\ell^{-2}\kappa(\ell),$$
and this equation defines an analytic set in $D(N)^\ell$.  We conclude
that an irreducible component of $D(N)^\ell$ on which $\chi$ is
trivial at $\ell$ is either generically Steinberg at $\ell$ or
generically unramified principal series at $\ell$.  Moreover, a
component that is generically Steinberg is in fact everywhere
Steinberg, while a generically unramified principal series component
may have a proper analytic subset of Steinberg points.  Now,
exploiting the evident finite map $D(N)^\ell\longrightarrow D(N)$, we
may draw these very same conclusions for $D(N)$.

Assume now that $N$ is square-free and odd.  Using the Shimura
lifting, we get a similar classification of the components of
$\widetilde{D}(4N)$.  Here, however, we can further divide the
Steinberg components into two types.  Let us again adjoin $\ell$ to
the list of eigenconditions (via the operator $U_{\ell^2}$) to obtain
the curve $\widetilde{D}(4N)^\ell$.  Let $x$ denote a classical point
of $\widetilde{D}(4N)^\ell$ with weight $\kappa(t) =
t^{(k-1)/2}\kappa'(t)$ and tame nebentypus $\chi$.  Factor $\chi =
\chi^{(\ell)}\chi^{(\sim\ell)}$ into a character modulo a power of
$\ell$ and a character of conductor prime to $\ell$.  In particular,
if $x$ lies in a Steinberg component, then $\chi^{(\ell)}$ is
quadratic, and the Steinberg condition can be
written $$\lambda_x(U_{\ell^2})^2 =
(\chi^{(\sim\ell)}(\ell))^2\ell^{-2}\kappa(\ell)^2$$ so
that $$\lambda_x(U_{\ell^2})=\pm
\chi^{(\sim\ell)}(\ell)\ell^{-1}\kappa(\ell).$$ Each sign defines an
analytic set in $\widetilde{D}(4N)^\ell$, so a Steinberg component
here can be labeled as either a ``$+$'' component or a ``$-$''
component.  Once again, we can exploit the finite map
$\widetilde{D}(4N)^\ell\longrightarrow \widetilde{D}(4N)$ to get such
a labeling of the components of $\widetilde{D}(4N)$.

Owing to the need to have a component on the half-integral weight side
with tame nebentypus of conductor divisible by $8$, we will later work
on the eigencurve $\widetilde{D}(8N)$ with $N$ odd and square-free.
Here, the Shimura lifting maps to $D(4N)$, which contains $D(2N)$ as a
union of irreducible components along with some other
(e.g. supercuspidal at $2$) components.  However, will only be
interested in those components that lie in $D(2N)$, and we label an
irreducible component of $\widetilde{D}(8N)$ that maps under the
Shimura lifting to an irreducible component of $D(2N)$ in the manner
described above.

\section{A criterion for linear dependence of sections of a coherent module}\label{sec:lindep}

We denote the sheaf of meromorphic functions on a rigid space $X$ by
$\scr{M}_X$. This is the localization of $\OO_X$ at the subsheaf of
regular (non-zerodivisor) sections.  For a coherent module $\scr{F}$
on $X$, we denote its $\OO_X$-dual by $$\scr{F}^{\vee} =
\sHom_{\OO_X}(\scr{F},\OO_X).$$ This module is also coherent and there
is a canonical map $\scr{F}\longmapsto \scr{F}^{\vee\vee}$ as usual.

The proofs of the following two basic lemmas concerning meromorphic
functions can be found in the discussion in Section 2 of
\cite{conradmoish}.
\begin{lemm}\label{bclemm1}
  Suppose that $\Sp(A)\subseteq X$ is an admissible affinoid open.
  The choice of a finite $A$-module $M$ and an identification
  $\scr{F}|_{\Sp(A)}=\widetilde{M}$ induce an
  identification $$\Gamma(X,\scr{F}\otimes_{\OO_X}\scr{M}_X)\cong
  M\otimes_{A}\mathrm{Frac}(A).$$
\end{lemm}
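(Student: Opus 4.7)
The plan is to upgrade the scalar-case identification $\Gamma(X,\scr{M}_X) \cong \mathrm{Frac}(A)$ --- which for $X$ reduced and irreducible is the basic statement in the Conrad--Moish treatment being invoked --- to the setting of coherent-sheaf coefficients. Throughout I assume $X$ is reduced and irreducible; otherwise both sides of the claimed identification split along the irreducible components of $X$ and the induced decomposition of $\Sp(A)$, and one argues on each component separately. Under irreducibility, for any non-empty admissible affinoid open $\Sp(B)\subseteq X$ the canonical map $\mathrm{Frac}(B)\to\mathrm{Frac}(A)$ is an isomorphism, so $\mathrm{Frac}(A)$ is intrinsically the common function field of $X$ and $\scr{M}_X$ may be thought of as a ``constant'' sheaf with value $\mathrm{Frac}(A)$.

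I would then consider the canonical restriction map
$$\rho\colon \Gamma(X,\scr{F}\otimes_{\OO_X}\scr{M}_X) \longrightarrow \Gamma(\Sp(A),\scr{F}\otimes_{\OO_X}\scr{M}_X) = M\otimes_A\mathrm{Frac}(A),$$
which is $\mathrm{Frac}(A)$-linear. Fix an admissible affinoid cover $\{\Sp(B_i)\}$ of $X$ containing $\Sp(A)$ on which $\scr{F}$ is finitely presented. On each $\Sp(B_i)$ the sheaf $\scr{F}\otimes\scr{M}_X$ has sections $\scr{F}(\Sp(B_i))\otimes_{B_i}\mathrm{Frac}(A)$, and on overlaps the coherent transition data for $\scr{F}$ become $\mathrm{Frac}(A)$-linear isomorphisms after extending scalars. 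For injectivity of $\rho$, a global section killed on $\Sp(A)$ is killed on any $\Sp(B_i)$ with $\Sp(B_i)\cap\Sp(A)\neq\emptyset$, and hence on every member of the cover by chaining through non-empty overlaps --- the overlap graph of the cover is connected because $X$ is admissibly connected under irreducibility. For surjectivity, an element $m\otimes\alpha\in M\otimes_A\mathrm{Frac}(A)$ is propagated around the cover via the $\mathrm{Frac}(A)$-linearized transition isomorphisms to produce a compatible local family, which glues to a global section of $\scr{F}\otimes\scr{M}_X$.

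The main obstacle is the propagation and gluing step: one must verify that the coherent transition data for $\scr{F}$ on overlaps $\Sp(B_i)\cap\Sp(B_j)$, once tensored with $\mathrm{Frac}(A)$, satisfy the cocycle condition on triple overlaps and thus define a legitimate global section. This ultimately reduces to the canonical nature of the identifications $\mathrm{Frac}(B_i)=\mathrm{Frac}(A)$ for varying $i$, which is precisely the content of the Conrad--Moish gluing argument that furnishes the scalar case $\scr{F}=\OO_X$. Once this compatibility is in hand, the propagation is formal and $\rho$ is the desired bijection.
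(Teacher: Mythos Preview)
The paper does not supply its own proof of this lemma; it simply refers the reader to Section~2 of \cite{conradmoish}. So there is no in-paper argument to compare your sketch against directly.

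That said, your approach rests on a claim that fails in rigid geometry. You assert that for $X$ reduced and irreducible the fraction fields $\mathrm{Frac}(B)$ of all nonempty affinoid opens $\Sp(B)\subseteq X$ are canonically isomorphic, so that $\scr{M}_X$ behaves like a constant sheaf with value $\mathrm{Frac}(A)$. This is false already for $X$ the rigid affine line. Take $\Sp(A)$ to be the closed unit disk and $\Sp(B)$ a strictly larger closed disk containing it. The restriction map $\mathrm{Frac}(B)\hookrightarrow\mathrm{Frac}(A)$ is not surjective: the series $f=\sum_{n\geq 0} p^{\lfloor n/2\rfloor}T^n$ lies in $A$ but has radius of convergence $|p|^{-1/2}$, and it cannot be written as a ratio $g/h$ with $g,h\in B$, since any such ratio is meromorphic on the larger disk with only finitely many poles and multiplying $f$ by a polynomial does not enlarge its radius of convergence. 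Unlike in the algebraic setting, the meromorphic function sheaf on an irreducible rigid space does not globalize to a single function field, so your propagation-and-gluing scheme cannot work.

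In fact the displayed identification with $\Gamma(X,\cdot)$ cannot hold at the stated level of generality, and both applications in the paper (in the proofs of Lemmas~\ref{degenlemm} and~\ref{lindep}) actually invoke the affinoid version
\[
\Gamma(\Sp(A),\scr{F}\otimes_{\OO_X}\scr{M}_X)\cong M\otimes_A\mathrm{Frac}(A).
\]
The substance of the cited result is that on a reduced affinoid the sheafification implicit in forming $\scr{M}_X$ and then $\scr{F}\otimes_{\OO_X}\scr{M}_X$ introduces no sections beyond the naive module-level tensor product; this is a local assertion and needs no chaining across a cover of $X$.
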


\begin{lemm}\label{bclemm2}
  Let $X$ be a reduced rigid space and let
  $\pi:\widetilde{X}\longrightarrow X$ be the normalization of $X$.  For
  coherent sheaves $\scr{F}$ on $X$, there is a natural
  identification $$\scr{F}\otimes_{\OO_X}\scr{M}_X
  \stackrel{\sim}{\longrightarrow}
  \pi_*(\pi^*\scr{F}\otimes_{\OO_{\widetilde{X}}}\scr{M}_{\widetilde{X}})$$
  that is functorial in $\scr{F}$.
\end{lemm}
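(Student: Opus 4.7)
The plan is to reduce the assertion to a local statement on admissible affinoid opens, where it becomes a piece of standard commutative algebra about normalization. The principal geometric input is the Grauert--Remmert finiteness theorem for normalization of reduced rigid spaces: the map $\pi:\widetilde{X}\to X$ is finite, and consequently for every admissible affinoid open $\Sp(A)\subseteq X$ the preimage $\pi^{-1}(\Sp(A))$ is the affinoid $\Sp(\widetilde{A})$, where $\widetilde{A}$ is the integral closure of the reduced affinoid algebra $A$ in its total ring of fractions $\mathrm{Frac}(A)$.

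Fix such a $\Sp(A)$ together with an identification $\scr{F}|_{\Sp(A)}\cong\widetilde{M}$ for a finite $A$-module $M$. On one hand, Lemma \ref{bclemm1} (applied with $\Sp(A)$ in place of $X$) identifies $\Gamma(\Sp(A),\scr{F}\otimes_{\OO_X}\scr{M}_X)$ with $M\otimes_A\mathrm{Frac}(A)$. On the other hand, finiteness of $\pi$ ensures that the pushforward over $\Sp(A)$ is restriction of scalars along $A\to\widetilde{A}$, and the coherent pullback $\pi^*\scr{F}|_{\Sp(\widetilde{A})}$ is associated to $M\otimes_A\widetilde{A}$; a second application of Lemma \ref{bclemm1} on $\Sp(\widetilde{A})$ identifies the sections of $\pi^*\scr{F}\otimes_{\OO_{\widetilde{X}}}\scr{M}_{\widetilde{X}}$ there with $M\otimes_A\widetilde{A}\otimes_{\widetilde{A}}\mathrm{Frac}(\widetilde{A})=M\otimes_A\mathrm{Frac}(\widetilde{A})$.

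The key algebraic input is that $\mathrm{Frac}(A)=\mathrm{Frac}(\widetilde{A})$ for any reduced Noetherian ring $A$. Indeed, decomposing $A$ via its minimal primes $p_i$ gives $\mathrm{Frac}(A)=\prod_i K_i$ with $K_i$ the residue field at $p_i$, and the normalization decomposes compatibly as $\widetilde{A}=\prod_i \widetilde{A}_i$ with $\widetilde{A}_i$ the integral closure of the domain $A/p_i$ in $K_i$; each $\widetilde{A}_i$ has fraction field $K_i$, so $\mathrm{Frac}(\widetilde{A})=\prod_i K_i=\mathrm{Frac}(A)$. Combining this identity with the two local computations above produces a canonical $A$-linear identification of the two sides over $\Sp(A)$, manifestly functorial in $M$. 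Compatibility with restriction to smaller affinoids then glues these local identifications into a natural isomorphism of sheaves on $X$ that is functorial in $\scr{F}$; intrinsically, one can recover this map from the composition of the unit $\scr{F}\to\pi_*\pi^*\scr{F}$ with the projection-formula identification $\pi_*\pi^*\scr{F}\otimes_{\OO_X}\scr{M}_X\cong\pi_*(\pi^*\scr{F}\otimes_{\OO_{\widetilde{X}}}\pi^{-1}\scr{M}_X)$ and the natural map $\pi^{-1}\scr{M}_X\to\scr{M}_{\widetilde{X}}$.

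The main obstacle is having the Grauert--Remmert finiteness theorem at hand, without which $\pi_*$ of a coherent sheaf cannot be described by a finite module over $A$. Once finiteness is available, the verification is purely algebraic, but some care is required in checking naturality: one must confirm that the locally-constructed identifications commute with the localization maps defining the sheaves $\scr{M}_X$ and $\scr{M}_{\widetilde{X}}$, which follows from the inherent naturality of Lemma \ref{bclemm1} in the choice of $A$ and $M$, together with the fact that the equality $\mathrm{Frac}(A)=\mathrm{Frac}(\widetilde{A})$ is compatible with restriction to basic affinoid subdomains.
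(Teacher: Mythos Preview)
The paper does not supply its own proof of this lemma; it simply refers the reader to the discussion in Section~2 of Conrad's preprint \cite{conradmoish}. Your argument is a correct self-contained verification along the expected lines: the essential inputs are the Grauert--Remmert finiteness of normalization for reduced affinoids (so that $\pi^{-1}(\Sp(A))=\Sp(\widetilde{A})$ with $\widetilde{A}$ a finite $A$-algebra), the elementary equality $\mathrm{Frac}(A)=\mathrm{Frac}(\widetilde{A})$ for a reduced Noetherian ring, and two invocations of Lemma~\ref{bclemm1} to identify sections of each side over an affinoid. Your intrinsic description via the unit $\scr{F}\to\pi_*\pi^*\scr{F}$ and the projection formula is a tidy way to exhibit a globally defined map whose local form is the identification you wrote down, which makes the gluing and functoriality essentially automatic; this is presumably what the cited discussion in Conrad amounts to as well.
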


The next Lemma gives several equivalent characterizations of ``generic
vanishing'' of a section of a coherent sheaf on a reduced rigid space.

\begin{lemm}\label{degenlemm}
  Let $X$ be a reduced rigid space and let $\scr{F}$ be a
  coherent sheaf on $X$.  For a section $f\in \Gamma(X,\scr{F})$, the
  following are equivalent:
  \begin{enumerate}
  \item[(a)] $f$ lies in the kernel of the natural map
    $$\scr{F}\longrightarrow \scr{F}\otimes_{\OO_X}\scr{M}_X;$$
  \item[(b)] $f$ lies in the kernel of the natural map 
    $$\scr{F}\longrightarrow \scr{F}^{\vee\vee};$$
  \item[(c)] $f$ vanishes on a nonempty admissible open in $X$ that meets
    each irreducible component of $X$;
  \item[(d)] $f$ is supported on a nowhere-dense analytic set in $X$.
  \end{enumerate}
\end{lemm}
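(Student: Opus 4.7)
The plan is to prove the four equivalences by reducing to local algebra: all four conditions are of local character, so we may work on an admissible affinoid cover and assume $X = \Sp(A)$ with $A$ reduced and $\scr{F} = \widetilde{M}$ for a finitely generated $A$-module $M$, identifying $f$ with an element of $M = \Gamma(X,\scr{F})$. Let $K$ denote the total ring of fractions of $A$; since $A$ is reduced Noetherian, $K$ is a finite product of fields, one for each minimal prime, i.e.\ each irreducible component of $\Sp(A)$. By Lemma \ref{bclemm1}, condition (a) is equivalent to $f$ lying in the kernel of $M \to M \otimes_A K$, namely in the $A$-torsion submodule $T(M) := \{m \in M : am = 0 \text{ for some nonzerodivisor } a \in A\}$.

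For (a) $\Leftrightarrow$ (d), $f \in T(M)$ iff $\mathrm{Ann}(f)$ contains a nonzerodivisor $a$, iff $\mathrm{Supp}(f) \subseteq V(a)$ is contained in an analytic subset of $\Sp(A)$ that contains no irreducible component, which is exactly nowhere-denseness in the reduced space. For (c) $\Leftrightarrow$ (d), the support of a coherent section is always analytic (cut out by its annihilator), and on a reduced rigid space an analytic set is nowhere dense iff it contains no irreducible component: if $f$ vanishes on an admissible open $U$ meeting every component then $\mathrm{Supp}(f) \cap U = \emptyset$ forces $\mathrm{Supp}(f)$ to avoid every component, while if $\mathrm{Supp}(f)$ is nowhere dense then its complement is an admissible open meeting every component on which $f$ vanishes.

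The central equivalence is (a) $\Leftrightarrow$ (b), which reduces locally to $\ker(M \to M^{\vee\vee}) = T(M)$, writing $M^{\vee} := \Hom_A(M,A)$. The inclusion $T(M) \subseteq \ker(M \to M^{\vee\vee})$ is immediate: if $am = 0$ with $a$ a nonzerodivisor and $\phi \in M^{\vee}$, then $a\phi(m) = \phi(am) = 0$ in the torsion-free $A$-module $A$, forcing $\phi(m) = 0$. The same computation shows that every $\phi \in M^{\vee}$ vanishes on $T(M)$, so $M^{\vee} = (M/T(M))^{\vee}$, and it suffices to prove $N \to N^{\vee\vee}$ injective for the torsion-free quotient $N := M/T(M)$. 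For this, $N \otimes_A K$ is finitely generated over the product of fields $K$ and hence embeds into $K^m$ for some $m$; clearing denominators (possible by finite generation of $N$) produces an $A$-linear injection $N \hookrightarrow A^m$, whose coordinate functionals separate points of $N$. The main obstacle here is precisely the reverse inclusion $\ker(M \to M^{\vee\vee}) \subseteq T(M)$: the clearing-denominators embedding rests essentially on reducedness of $A$ (to make $K$ a product of fields and $A$ torsion-free over itself) together with finite generation of $M$; the other equivalences then follow formally from the standard dictionary between annihilation by nonzerodivisors, analytic supports, and irreducible-component geometry of reduced affinoids.
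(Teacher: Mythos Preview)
Your argument is correct, though the opening claim that all four conditions are local is not quite right for (c); fortunately your treatment of (c) $\Leftrightarrow$ (d) is in fact global (resting only on the fact that the support of a section of a coherent sheaf is the analytic set cut out by its annihilator), so this does not affect the proof. The route differs from the paper's in two places. For (a) $\Leftrightarrow$ (b), the paper uses a commutative square through $\scr{F}^{\vee\vee}\otimes_{\OO_X}\scr{M}_X$, observing that both $\scr{F}^{\vee\vee}\to\scr{F}^{\vee\vee}\otimes_{\OO_X}\scr{M}_X$ (torsion-freeness of the double dual) and $\scr{F}\otimes_{\OO_X}\scr{M}_X\to\scr{F}^{\vee\vee}\otimes_{\OO_X}\scr{M}_X$ (an isomorphism over a product of fields) are injective; your explicit embedding of $M/T(M)$ into a free module achieves the same conclusion more constructively. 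More substantively, to close the cycle the paper proves (c) $\Rightarrow$ (a) by pulling back to the normalization via Lemma~\ref{bclemm2}, reducing to the normal connected case, and then propagating vanishing along chains of overlapping affinoids using injectivity of restriction on $\scr{F}\otimes_{\OO_X}\scr{M}_X$. You bypass normalization entirely by going through (d) and invoking prime avoidance to produce a nonzerodivisor in $\mathrm{Ann}(f)$ once its zero set misses every minimal prime. Your path is more elementary and self-contained; the paper's has the virtue that the normalization machinery it invokes is reused immediately afterwards in the proof of Lemma~\ref{lindep}.
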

\begin{proof}
  The fact that (a) and (b) are equivalent follows from the facts that the 
  right and bottom arrows in the natural diagram 
 $$\xymatrix{ \scr{F}\ar[r]\ar[d] & \scr{F}^{\vee\vee}\ar[d]
    \\ \scr{F}\otimes_{\OO_X} \scr{M}_X \ar[r] &
    \scr{F}^{\vee\vee}\otimes_{\OO_X}\scr{M}_X}$$ are injections.
  Suppose that a section $f$ satisfies condition (a).  Then there
  exists a non-zerodivisor $a$ on $X$ with $af=0$.  It follows that
  the support of $f$ lies in the zero locus of $a$, which is a
  nowhere-dense analytic set in $X$ since $a$ is not a zero-divisor,
  so (a) implies (d).  That (d) implies (c) follows because the
  complement of an analytic set is an admissible open.
  
  We claim that (c) implies (a), which will complete the proof.  By
  Lemma \ref{bclemm2}, it suffices to prove this after pulling back to
  the normalization.  Looking at connected components, we see that it
  suffices to prove the claim when $X$ is moreover normal and
  connected.  But then any two points in $X$ can be connected by a
  finite chain of nonempty admissible affinoid opens, so it suffices
  to check that for an inclusion $\Sp(B)\subseteq \Sp(A)$ of such
  opens, the restriction
  map $$\Gamma(\Sp(A),\scr{F}\otimes_{\OO_X}\scr{M}_X) \longrightarrow
  \Gamma(\Sp(B),\scr{F}\otimes_{\OO_X}\scr{M}_X)$$ is injective.  By
  normality, $A$ and $B$ are domains, and this injectivity follows
  readily from the description of sections in Lemma \ref{bclemm1}.
\end{proof}

Recall that a subset $S$ of a rigid space $X$ is \emph{Zariski-dense}
if the only analytic set containing $S$ is all of $X$.
\begin{lemm}\label{mapsto0}
  Let $X$ be a normal and connected rigid-analytic curve,
  let $\scr{F}$ be a coherent sheaf on $X$, and let $s$ be a global
  section of $\scr{F}$.  If $s$ vanishes at a Zariski-dense set in
  $X$, then the image of $s$ in $\scr{F}\longrightarrow
  \scr{F}\otimes_{\OO_X}\scr{M}_X$ is zero.
\end{lemm}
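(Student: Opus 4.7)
The plan is to invoke the equivalence $(a) \Leftrightarrow (b)$ of Lemma \ref{degenlemm} and reduce to showing that the image $\bar{s}$ of $s$ in $\scr{F}^{\vee\vee}$ is zero. Since $X$ is normal and connected, $X$ is irreducible. Since $X$ is normal of dimension one, it is smooth, and standard commutative algebra (finitely generated reflexive modules over a regular local ring of dimension one are free) then guarantees that the reflexive coherent sheaf $\scr{F}^{\vee\vee}$ is locally free on $X$.

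By functoriality of the fiber functor applied to the canonical map $\scr{F}\to \scr{F}^{\vee\vee}$, the section $\bar{s}$ vanishes in the fiber at every point where $s$ does. In a local trivialization $\scr{F}^{\vee\vee}|_U \cong \OO_U^r$, the section $\bar{s}|_U$ is an $r$-tuple of analytic functions on $U$ and its fiber-vanishing locus on $U$ is their common zero set, hence a closed analytic subset of $U$. Globalizing, the fiber-vanishing locus $Z\subseteq X$ of $\bar{s}$ is a closed analytic subset of $X$. Since $Z$ contains a Zariski-dense set and $X$ is irreducible, we must have $Z = X$, so $\bar{s}$ vanishes in every fiber.

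Because $X$ is reduced and $\scr{F}^{\vee\vee}$ is locally free, a section vanishing in every fiber must itself be zero: the coordinate functions of $\bar{s}$ in any local trivialization vanish at every rigid point of an affinoid open, and by the Nullstellensatz for reduced affinoid algebras they are then identically zero. With $\bar{s}=0$, Lemma \ref{degenlemm} gives the desired vanishing of the image of $s$ in $\scr{F}\otimes_{\OO_X}\scr{M}_X$. The point that demands most care is the local freeness of $\scr{F}^{\vee\vee}$ on a smooth rigid-analytic curve, which is a transfer to the rigid setting of the standard algebraic fact about reflexive modules in dimension one.
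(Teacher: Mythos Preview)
Your proof is correct and follows essentially the same approach as the paper's. Both arguments pass to $\scr{F}^{\vee\vee}$, use that this sheaf is locally free on a normal curve (the paper phrases this as ``torsion-free implies locally free,'' you via reflexivity over a DVR), observe that the zero-locus of the image $\bar{s}$ is therefore analytic and hence all of $X$ by Zariski-density, and then invoke Lemma~\ref{degenlemm}. Your version simply spells out in more detail the step from ``$\bar{s}$ vanishes in every fiber'' to ``$\bar{s}=0$'' via the Nullstellensatz, which the paper leaves implicit.
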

\begin{proof}
With the hypotheses on $X$, the sheaf $\scr{F}^{\vee\vee}$ is
locally-free since it is torsion-free, so the zero-locus of a section
is an analytic set.  As the image of $s$ in this sheaf vanishes on a
Zariski-dense set, this images vanishes and the result follows from
Lemma \ref{degenlemm}.
\end{proof}

\begin{lemm}\label{lindep}
  Let $X$ be a reduced rigid-analytic space of pure dimension $1$, let
  $\scr{F}$ be a coherent sheaf on $X$, and let $s_1$ and $s_2$ be global
  sections of $\scr{F}$ such that
  \begin{enumerate}
  \item the section $s_2$ does not satisfy the equivalent conditions
    of Lemma \ref{degenlemm} on any irreducible component of $X$, and    
  \item there exists a Zariski-dense subset $S\subseteq X$
    such that $s_1(x)$ and $s_2(x)$ are linearly dependent in the
    fiber $\scr{F}(x)$ for all $x\in S$.
  \end{enumerate}
  Then there exists a global meromorphic function $\Phi$ on $X$ such
  that $s_1\otimes 1 = s_2\otimes \Phi$ in
  $\scr{F}\otimes_{\OO_X}\scr{M}_X$.  
\end{lemm}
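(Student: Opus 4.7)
The strategy is to form the exterior product $s_1 \wedge s_2$ and use Lemma \ref{mapsto0} to show it vanishes generically, at which point local linear dependence over the fraction field gives the desired ratio.

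We begin with reductions. By Lemma \ref{bclemm2}, applied both to $\scr{F}$ and to the structure sheaf, global sections of $\scr{F}\otimes_{\OO_X}\scr{M}_X$ and global meromorphic functions on $X$ correspond under pushforward from the normalization $\pi:\widetilde{X}\to X$, and the relation $s_1\otimes 1 = s_2\otimes \Phi$ on $X$ corresponds to its pullback on $\widetilde{X}$. It therefore suffices to produce $\Phi$ on each connected component of $\widetilde{X}$. On each such component the hypotheses transfer: the pullback of $s_2$ is generically non-vanishing because $s_2$ is, and the Zariski-density of $S$ transfers to its preimage using the birationality of $\pi$ and the fact that an analytic subset of $\widetilde{X}$ containing $\pi^{-1}(S)$ has image an analytic subset of $X$ containing $S$. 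Hence we may assume $X$ is normal and connected. Next, the torsion subsheaf $\scr{T}\subseteq \scr{F}$ is annihilated upon tensoring with $\scr{M}_X$, so replacing $\scr{F}$ by $\scr{F}/\scr{T}$ does not alter the problem; torsion-free coherent sheaves on a normal rigid-analytic curve are locally free, so we may assume $\scr{F}$ is locally free of some rank $r$.

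Consider the section $\omega = s_1\wedge s_2 \in \Gamma(X,\bigwedge^2\scr{F})$. For every $x\in S$, the fiber $\omega(x) = s_1(x)\wedge s_2(x)$ vanishes in $\bigwedge^2_{k(x)}\scr{F}(x)$ by hypothesis (2), since linear dependence of two vectors is detected by the exterior square. Because $S$ is Zariski-dense in the normal connected curve $X$, Lemma \ref{mapsto0} applies to the coherent sheaf $\bigwedge^2\scr{F}$ and forces the image of $\omega$ in $\bigwedge^2\scr{F}\otimes_{\OO_X}\scr{M}_X$ to be zero. On any admissible affinoid open $\Sp(A)\subseteq X$ trivializing $\scr{F}$, this says via Lemma \ref{bclemm1} that $s_1$ and $s_2$ are $K$-linearly dependent in $A^r\otimes_A K$, where $K=\mathrm{Frac}(A)$. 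Hypothesis (1) together with Lemma \ref{degenlemm} guarantees that $s_2\neq 0$ in $A^r\otimes_A K$, so there is a unique $\phi\in K$ with $s_1 = \phi s_2$ in $A^r\otimes_A K$. The uniqueness of $\phi$ forces these local functions to agree on overlaps, so they glue to a global meromorphic function $\Phi$ on $X$ satisfying $s_1\otimes 1 = s_2\otimes \Phi$ in $\scr{F}\otimes_{\OO_X}\scr{M}_X$.

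The main technical point is the reduction step, specifically the verification that Zariski-density of $S$ transfers correctly to its preimage on each connected component of $\widetilde{X}$, together with the observation that torsion-free coherent sheaves on a normal rigid-analytic curve are locally free. Once one is in the normal connected locally-free setting, the exterior product argument combined with Lemmas \ref{mapsto0}, \ref{bclemm1}, and \ref{degenlemm} delivers the conclusion cleanly.
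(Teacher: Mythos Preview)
Your proof is correct and follows essentially the same route as the paper: reduce to the normal connected case via Lemma~\ref{bclemm2}, apply Lemma~\ref{mapsto0} to $s_1\wedge s_2$ in $\bigwedge^2\scr{F}$, and then produce $\Phi$ locally over affinoids using Lemma~\ref{bclemm1} and the nonvanishing of $s_2$ from Lemma~\ref{degenlemm}. The only cosmetic difference is that you pass explicitly to the torsion-free quotient $\scr{F}/\scr{T}$ to make $\scr{F}$ locally free, whereas the paper works directly with $\scr{F}$ (local freeness of $\scr{F}^{\vee\vee}$ is only used inside Lemma~\ref{mapsto0}); both are fine since tensoring with $\scr{M}_X$ kills torsion anyway.
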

\begin{proof}
  Let $\pi:\widetilde{X}\longrightarrow X$ denote the normalization of
  $X$.  By Lemma \ref{bclemm2},  it suffices to show
  that there exists a section $\Psi\in
  \Gamma(\widetilde{X},\scr{M}_{\widetilde{X}})$ such
  that $$\pi^*(s_1)\otimes 1 =
  \pi^*(s_2)\otimes\Psi\ \ \mathrm{in}\ \ \pi^*\scr{F}
  \otimes_{\OO_{\widetilde{X}}} \scr{M}_{\widetilde{X}},$$ which we may
  check on each connected component of $\widetilde{X}$ separately.
  Note that the hypotheses on $s_1$ and $s_2$ in the statement are
  satisfied on each such component by the pull-backs $\pi^*(s_1)$ and
  $\pi^*(s_2)$ since $\pi$ is finite.

   Thus we are reduced to proving the lemma in the case where the
   curve $X$ is moreover normal and connected.  In this case, the
   torsion-free coherent sheaf $\scr{F}^{\vee\vee}$ is locally-free.
   By Lemma \ref{mapsto0} applied to $\bigwedge_{\OO_X}^2\scr{F}$, the
   section $s_1\wedge s_2$ maps to zero
   in $$\left(\textstyle\bigwedge^{2}_{\OO_X}\scr{F}\right)\otimes_{\OO_X}\scr{M}_X
   \cong\textstyle\bigwedge^2_{\scr{M}_X}(\scr{F}\otimes_{\OO_X}\scr{M}_X).$$
   We will construct the meromorphic function $\Phi$ locally and glue.
   Let $\Sp(A)\subseteq X$ be an admissible affinoid open.  Then $A$
   is a domain, say with field of fractions $K$.  Choose a finite
   $A$-module $M$ with $\scr{F}\cong\widetilde{M}$ on $\Sp(A)$, so
   that
 $$\Gamma(\Sp(A),\scr{F}\otimes_{\OO_X}\scr{M}_X) \cong M\otimes_A K$$
   by Lemma \ref{bclemm1}.  The section $s_2\otimes 1$ restricts to a
   nonzero element of this finite-dimensional $K$-vector space by
   Lemma \ref{degenlemm}.  Since $(s_1\otimes 1)\wedge (s_2\otimes
   1)=0$ on $\Sp(A)$, there exists $\psi\in K$ such that $s_1\otimes 1
   = s_2\otimes \psi$.  That these $\psi$ glue to a global section of
   $\scr{M}_{X}$ follows from the non-vanishing of $s_2\otimes 1$ on a
   nonempty admissible open intersection of two admissible opens.
\end{proof}

\section{Some results of Waldspurger}\label{sec:wald}

In \cite{waldshimura} and \cite{waldspurger}, Waldspurger recasts the
Shimura lifting in terms of automorphic representations and uses this
perspective to prove some powerful theorems relating the coefficients
of half-integral weight modular forms to special values of the
$L$-functions of their integral weight Shimura lifts.  In this
section, we recall a key construction and two of the main results of
\cite{waldspurger}.  We will use the notation of \cite{waldspurger}
freely here, and we will take $N$ to be any positive integer
for the remainder of this section.

Let $\chi$ be a Dirichlet character modulo $4N$ and let $f_0$ be a
newform of weight $k-1$ and nebentypus $\chi^2$.  Following
Waldspurger, we let $\underline{\lambda}_\ell$ denote the eigenvalue
of $T_\ell$ or $U_\ell$ on $f_0$, and we set $$S_{k/2}(4N,\chi,f_0) =
\{F\in S_{k/2}(4N,\chi)\ |\ T_{\ell^2}F =
\underline{\lambda}_{\ell}F\ \mbox{for almost all}\ \ell\nmid 2N\}.$$
Here, $S_{k/2}(4N,\chi)$ is the space of cusp forms of level $4N$ and
weight $k/2$ with nebentypus character $\chi$ modulo $4N$.  In
\cite{waldspurger}, Waldspurger provides a recipe for building the
space $S_{k/2}(4N,\chi,f_0)$ from local data associated to $f_0$ and
$\chi$.  The details of this recipe are far too long to reproduce
here.  Nonetheless, we provide an outline that will allow the
motivated reader to compare the details of the arguments below to the
construction in \cite{waldspurger}.

For each prime number $\ell$, Waldspurger defines a certain
non-negative integer $\tilde{n}_\ell\leq \ord_\ell(4N)$.  Then, for
each integer $e$ with $\tilde{n}_\ell\leq e\leq \ord_{\ell}(4N)$, he
defines a certain set of functions $U_\ell(e)$ that we may take to be
defined on the set of positive integers.  The nature of
$\tilde{n}_\ell$ and the functions in $U_\ell(e)$ is dictated by local
(at $\ell$) data associated to the integral weight newform $f_0$ and
the half-integral weight nebentypus $\chi$.

For a positive integer $n$, let $n^\sqf$ denote the square-free part
of $n$, and let $A$ be a function defined on the set of square-free
positive integers.  Now, for each prime $\ell$, choose an integer $e$
as above and an element $c_\ell\in U_\ell(e)$, and from these data we
form the $q$-expansion $$f(A,\{c_v\}) = \sum_{n=1}^\infty
A(n^\sqf)\prod_v c_v(n)q^n,$$ where we take $c_\infty(n) =
n^{(k-2)/4}$.  The following is Th\'eor\`eme 1 of \cite{waldspurger}.
\begin{theo}[Waldspurger]\label{waldmain}
  Let $k\geq 5$ be an odd positive integer and suppose that $f_0$
  satisfies hypotheses (H1) and (H2). There exists a function $A$
  defined on the set of square-free positive integers satisfying the
  following two conditions:
  \begin{enumerate}
  \item[(a)] $A(n)^2 = L(f_0\otimes\chi_0^{-1}\chi_n,(k-1)/2)
    \varepsilon(\chi_0^{-1}\chi_n,1/2)\ $, and
  \item[(b)] the collection of $f(A,\{c_v\})$ defined above
    spans  the space $S_{k/2}(4N,\chi,f_0)$.
  \end{enumerate}
\end{theo}
\begin{rema}\label{remarks}\
  \begin{enumerate}
  \item Let $\pi=\otimes \pi_v$ be the automorphic representation
    attached to $f_0$.  Hypothesis (H1) is the assertion that, for all
    $\ell$ with $\pi_\ell$ irreducible principal series associated to
    the characters $\mu_1, \mu_2$ of $\Q_p^\times$, we have
    $\mu_1(-1)=\mu_2(-1)=1$.  By a theorem of Flicker, this is
    equivalent to $f_0$ being in the image of the Shimura lifting at
    some level.  We observe for later use that if the local central
    character $\mu_1\mu_2$ is even and the conductor of  $\pi_\ell$
    divides $\ell$, then hypothesis (H1) is automatically satisfied
    since at most one of $\mu_1$ and $\mu_2$ can be ramified.
  \item 
    Hypothesis (H2) pertains to $\pi_2$.  For our purposes, it
    suffices to note that (H2) is satisfied if $\pi_2$ is not
    supercuspidal.
  \item 
    While we have chosen to use the more classical normalization of the
    argument of the $L$-function, we have preserved Waldspurger's
    normalization for the $\varepsilon$ factor above.  This choice is
    irrelevant for our purposes, as all we use about this value is that
    it is non-vanishing.  
  \end{enumerate}
\end{rema}

\begin{coro}\label{wald1}
  With notation as in Theorem \ref{waldmain}, if $n$ is a square-free
  positive integer such that
  $$L(f_0\otimes \chi_0^{-1}\chi_n,(k-1)/2) = 0$$ then $a_n\equiv 0$
  on $S_{k/2}(4N,\chi,f_0)$.
\end{coro}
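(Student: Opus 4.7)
The plan is to combine the two parts of Theorem \ref{waldmain} in a direct way. Since $n$ is assumed square-free, its square-free part $n^{\sqf}$ equals $n$, so for any generator $f(A,\{c_v\})$ of $S_{k/2}(4N,\chi,f_0)$ as in the theorem, the $n$-th Fourier coefficient is simply
$$A(n)\prod_v c_v(n).$$
So it suffices to show that $A(n) = 0$ under the hypothesis.

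By part (a) of Theorem \ref{waldmain}, we have
$$A(n)^2 \;=\; L(f_0\otimes \chi_0^{-1}\chi_n,(k-1)/2)\,\varepsilon(\chi_0^{-1}\chi_n,1/2),$$
and the epsilon factor is nonzero. The vanishing hypothesis on the $L$-value therefore forces $A(n)^2 = 0$, hence $A(n) = 0$.

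The final step is to invoke part (b) of Theorem \ref{waldmain}, which tells us that the collection of $q$-expansions $f(A,\{c_v\})$ as the local data $\{c_v\}$ vary spans the space $S_{k/2}(4N,\chi,f_0)$. Since the $n$-th Fourier coefficient of each such spanning element is a multiple of $A(n) = 0$, the linear functional $a_n$ vanishes on a spanning set, hence on all of $S_{k/2}(4N,\chi,f_0)$.

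There is no real obstacle: the content of the corollary is essentially the contrapositive reading of Waldspurger's formula, and the square-freeness of $n$ is exactly what allows one to read off $A(n)$ (rather than $A$ evaluated at some proper divisor of $n$) from the $n$-th Fourier coefficient. The only thing worth being careful about is ensuring that the product $\prod_v c_v(n)$ cannot somehow be nonzero in a way that saves a nonvanishing contribution, but since it is simply a scalar multiplier of $A(n)$, the conclusion is immediate once $A(n) = 0$ is established.
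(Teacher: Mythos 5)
Your proof is correct and is precisely the argument the paper implicitly intends: the corollary is stated without proof, and the paper even flags in Remark \ref{remarks}(3) that the only fact used about the $\varepsilon$ factor is its non-vanishing, which is exactly the step where you invoke it. The reduction to $A(n)=0$ via square-freeness of $n$ (so $n^{\sqf}=n$) and the final appeal to the spanning property in part (b) of Theorem \ref{waldmain} are exactly the intended steps.
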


The converse of Corollary \ref{wald1} is false.  However, Theorem
\ref{waldmain} can be used to analyze the vanishing of $a_n$ in
general.  This analysis is carried out in in case $N$ is odd and
square-free in Proposition \ref{hardprop} in order to characterize in
Corollary \ref{components} the degenerate irreducible components (see
Definition \ref{def:degen}) for a given coefficient section $a_n$.

\begin{theo}[Waldspurger]\label{wald2}
  Let $n$ and $m$ be square-free positive integers such that $n/m\in
  (\Q_{\ell}^\times)^2$ for all $\ell\mid 2N$.  If $F\in
  S_{k/2}(4N,\chi,f_0)$ then
  \begin{eqnarray*}
    \lefteqn{ a_n(F)^2 L(f_0\otimes \chi_0^{-1}\chi_m, (k-1)/2)
      \chi(m/n) m^{k/2-1} } && \\ && = a_m(F)^2 L(f_0\otimes
    \chi_0^{-1}\chi_n, (k-1)/2) n^{k/2-1}
    \end{eqnarray*}
\end{theo}
\begin{proof}
  If $F=0$ then the statement is trivial. Otherwise hypothesis (H1) is
  satisfied by Proposition 2 of \cite{waldspurger}, and this result is
  Corollaire 2 of \cite{waldspurger} applied to $F$.
\end{proof}

\section{Interpolation of square roots}\label{sec:interp}

In this section, we establish the basic interpolation result on the
half-integral weight side.  Here, we make no assumption on the tame
level. The trade-off is that we will have little control over which
irreducible components we may interpolate.  In the next section, we
impose the square-free condition and determine these components, as
well as move the interpolation to the integral weight eigencurve.

Fix a a primitive $n^{\mathrm{th}}$ root of unity $\zeta_{4Np}\in\C_p$
thought of as a point on the Tate curve $\Tate(q)$.  As defined in
Section 5 of \cite{hieigencurve}, half-integral weight modular forms
and families thereof have $q$-expansions associated to this point on
$\Tate(q)$.  For a fixed positive integer $n$, the $n^\mathrm{th}$
coefficient in this expansion gives an element of the dual of the
Banach space of families of forms.  It is a simple matter to verify
that these elements glue to a section $$a_n\in
\Gamma(\widetilde{D}(4N), \scr{N}^*),$$ and that the image of this
element in the fiber $\scr{N}^*(x)$ is the $n^\mathrm{th}$ coefficient
map on the eigenspace associated to $x$ under the identification
(\ref{fibers}).

\begin{defi}\label{def:degen}
  We will say that an $a_n$ is \emph{degenerate} on an irreducible
  component $C$ of $\widetilde{D}(4N)_\red$ if $a_n$ satisfies the
  equivalent conditions of Lemma \ref{degenlemm} on $C$.
\end{defi}

\begin{prop}\label{lindepapp}
  Let $N$ be a positive integer that is relatively prime to $p$ and
  let $n$ and $m$ be square-free positive integers such that $n/m\in
  (\Q_\ell^\times)^2$ for all $\ell | 2Np$.  Let $C$ be an irreducible
  component of $\widetilde{D}(8N)_\red$, and assume that $C$ is
  generically principal series or Steinberg at the prime $2$.  Then,
  there exists a Zariski-dense subset $S$ of $C$ such that $a_n(x)$
  and $a_m(x)$ are linearly dependent in the fiber $\scr{N}^*(x)$ for
  all $x\in S$.
\end{prop}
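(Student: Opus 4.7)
The plan is to construct $S$ from the low-slope strictly classical points of $C$ at which Waldspurger's Theorem \ref{wald2} applies, and then to deduce linear dependence of $a_n(x)$ and $a_m(x)$ from the resulting squared identity. By \cite{hieigencurve} and \cite{rigidshimura}, the low-slope classical points of weight $k\ge 5$ are Zariski-dense in $\widetilde{D}(8N)$, and at such a point $x$ the entire eigenspace $M_{\lambda_x}$ consists of classical forms lying in Waldspurger's space $S_{k/2}(4N',\chi,f_0)$ for an appropriate level $4N'$ divisible by $8N$ (absorbing the wild ramification), with $\chi$ the nebentypus at $x$ and $f_0$ the newform associated to $\mathrm{Sh}(x)$.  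Removing the supercuspidal-at-$2$ locus from $C$ leaves a Zariski-dense subset $S\subseteq C$: by the standing hypothesis that $C$ is generically principal series or Steinberg at $2$, this locus is contained in a proper analytic subset of $C$, by an argument parallel to the odd-prime analysis of Section \ref{sec:shimura}. At every $x\in S$, Waldspurger's hypothesis (H2) holds because $\pi_2$ is not supercuspidal, and (H1) is automatic because $f_0$ is in the image of the Shimura lifting by construction (Remark \ref{remarks}(i)).

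For $x\in S$ and $F\in M_{\lambda_x}$, the assumption $n/m\in(\Q_\ell^\times)^2$ for $\ell\mid 2Np$ subsumes Waldspurger's corresponding hypothesis at the relevant level, so Theorem \ref{wald2} yields
\begin{equation*}
A\cdot a_n(F)^2 = B\cdot a_m(F)^2,
\end{equation*}
with $A,B$ depending only on $f_0$ and $\chi$, not on $F$.  If either central $L$-value vanishes, Corollary \ref{wald1} forces the corresponding functional to vanish identically on $M_{\lambda_x}$, so linear dependence is immediate.  Otherwise $A,B\neq 0$ and $a_n(F)^2 = c^2 a_m(F)^2$ for a fixed nonzero $c\in L$.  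I would then conclude $a_n = \pm c\cdot a_m$ as functionals on $M_{\lambda_x}$ by a short sign analysis: the identity factors as $(a_n - c\,a_m)(a_n + c\,a_m)=0$, so $M_{\lambda_x} = V_+\cup V_-$ with $V_\pm = \ker(a_n\mp c\,a_m)$; if forms $F_1\in V_+$, $F_2\in V_-$ both had nonzero $a_m$-value, applying the squared identity to $F_1+F_2$ would force $a_m(F_1)a_m(F_2)=0$, a contradiction, while any $F$ with $a_m(F)=0$ has $a_n(F)=0$ automatically.  Under the canonical identification $\scr{N}^*(x)\cong\Hom_L(M_{\lambda_x},L)$ of (\ref{fibers}), this is precisely the desired linear dependence of $a_n(x)$ and $a_m(x)$ in $\scr{N}^*(x)$.

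I expect the main subtlety to be the local analysis at $\ell=2$ underpinning the Zariski-density of $S$, namely showing that the supercuspidal-at-$2$ locus is a proper analytic subset of any irreducible component generically of the allowed types. This parallels the odd-prime computations of Section \ref{sec:shimura} in spirit, but the relevant Hecke data (an analogue of $U_{2^2}$ at the prime $2$ on the half-integral weight side) is not directly part of the given eigencurve, so one must pass to an auxiliary eigencurve as in that section before descending. The sign analysis is elementary, but it is essential to apply the squared Waldspurger identity to the entire eigenspace rather than a single form, since the linear dependence we need is sensitive to a sign choice that no single relation can pin down.
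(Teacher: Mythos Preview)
Your approach matches the paper's: take $S$ to be the low-slope strictly classical points of weight $k\geq 5$ that are not supercuspidal at $2$, and invoke Waldspurger on the classical eigenspace at each such point. The one place the paper is more economical is the case where the $L$-value attached to $n$ is nonzero: rather than your sign analysis, it simply reads off from Theorem~\ref{wald2} that $a_n(F)=0$ forces $a_m(F)=0$, so $\ker a_n(x)\subseteq\ker a_m(x)$, and two linear functionals with nested kernels are automatically proportional---no need to pin down the sign at all. As for your flagged subtlety at $\ell=2$, the paper treats it exactly as you suggest, taking the Zariski-density of the non-supercuspidal low-slope locus as a direct consequence of the standing hypothesis on $C$ together with the density results of \cite{rigidshimura}.
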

\begin{proof}
  The set of low-slope (and hence strictly classical) points is
  Zariski-dense in $C$.  By hypothesis, the same is then true of the
  set of low-slope points that are not supercuspidal at the prime $2$.
  Let $x$ be such a point of weight
  $\kappa(t)=t^{(k-1)/2}\kappa'(t)$ with $k$ odd and at least $5$ and
  $\kappa'$ of conductor $p^r$.  Let $\lambda_x$ be the corresponding
  system of eigenvalues and let $S_x$ denote the
  eigenspace of forms for $\lambda_x$.  In particular
  $S_x\subseteq
  S_{k/2}(8Np^r,\chi\kappa',f_0)$ where $f_0$ denotes the
  integral weight newform associated to $x$.

  If $$L(f_0\otimes \chi_0^{-1}\kappa'^{-1}\chi_n,(k-1)/2)=0$$ then
  $a_n\equiv 0$ on $S_{x}$ by Corollary \ref{wald1}.  Thus
  $a_n(x)=0$, so $a_n(x)$ and $a_m(x)$ are trivially linearly
  dependent.  On the other hand, suppose that $$L(f_0\otimes
  \chi_0^{-1}\kappa'^{-1}\chi_n,(k-1)/2)\neq 0.$$ By Lemma
  \ref{wald2}, if $F\in S_{x}$ and $a_n(F)=0$, then
  $a_m(F)=0$ as well.  Thus the kernel of the linear functional
  $a_n(x)$ is contained in that of $a_m(x)$, and again we see that
  these two are linearly dependent.
\end{proof}

\begin{coro}\label{phiexists}
  With notation as in Proposition \ref{lindepapp}, suppose further
  that $a_n$ is not degenerate on $C$.  Then there exists a meromorphic
  function $\Phi_{m,n}$ on $C$ with the property that $a_m\otimes 1 =
  a_n\otimes \Phi_{m,n}$ holds in $\scr{N}_\red^*\otimes \scr{M}_C$.
\end{coro}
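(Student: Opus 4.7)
The plan is to invoke Lemma~\ref{lindep} directly, taking the ambient space to be $X = C$, the coherent sheaf to be $\scr{F} = \scr{N}_\red^*|_C$, and the two sections to be $s_1 = a_m|_C$ and $s_2 = a_n|_C$. First I would note that $C$ qualifies as a reduced rigid-analytic space of pure dimension one: it is by construction an irreducible component of the reduced eigencurve $\widetilde{D}(8N)_\red$, which is equidimensional of dimension one over the weight space $\scr{W}$.

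Next I would verify the two numbered hypotheses of Lemma~\ref{lindep}. Because $C$ is irreducible, the requirement that $s_2 = a_n|_C$ not satisfy the equivalent conditions of Lemma~\ref{degenlemm} on any irreducible component of $X$ collapses to the single assertion that $a_n$ is not degenerate on $C$, which is exactly the hypothesis of the corollary. The second requirement — the existence of a Zariski-dense subset $S \subseteq C$ on which $a_n(x)$ and $a_m(x)$ are linearly dependent in the fiber $\scr{N}^*(x)$ — is precisely the conclusion of Proposition~\ref{lindepapp}, whose hypotheses coincide with those imposed here (square-free positive integers $n,m$ with $n/m \in (\Q_\ell^\times)^2$ for $\ell \mid 2Np$, and $C$ generically principal series or Steinberg at $2$).

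Applying Lemma~\ref{lindep} then produces a global meromorphic function $\Phi_{m,n} \in \Gamma(C,\scr{M}_C)$ such that $a_m \otimes 1 = a_n \otimes \Phi_{m,n}$ in $\scr{N}_\red^* \otimes_{\OO_C} \scr{M}_C$, as required. No substantive obstacle arises: all of the technical content (passing from pointwise linear dependence to a global meromorphic scalar, including the descent to the normalization and the gluing argument) has already been packaged inside Lemma~\ref{lindep}, while the pointwise linear dependence of the Fourier coefficient functionals on classical fibers is extracted from Waldspurger's Theorem~\ref{wald2} and Corollary~\ref{wald1} in Proposition~\ref{lindepapp}.
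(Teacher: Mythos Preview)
Your proof is correct and follows exactly the approach of the paper, which simply states that the corollary follows from Proposition~\ref{lindepapp} and Lemma~\ref{lindep}. You have spelled out the verification of the hypotheses in more detail than the paper does, but the argument is the same.
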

\begin{proof}
  This follows from the Proposition \ref{lindepapp} and Lemma
  \ref{lindep}.
\end{proof}

By \S1.2 of \cite{conradirredcpnts}, the singular (equivalently,
non-normal) locus in the reduced component $C$ is a proper analytic
set.

\begin{theo}\label{maininterp}
  With notation as in Proposition \ref{lindepapp}, suppose that $x\in
  C$ is a point of weight $\kappa(t) =t^{(k-1)/2}\kappa'(t)$, with
  $k\geq 5$ and $\kappa'$ torsion, satisfying
  \begin{itemize}
  \item $x$ is strictly classical,
  \item $x$ is not supercuspidal at $2$,
  \item $C$ is smooth at $x$,
  \item $\scr{N}^*_\red$ is torsion-free on $C$ at $x$, and
  \item $a_n(x)\neq 0$.
  \end{itemize}
  Then $\Phi_{m,n}$ is regular at $x$ and we have
  $$\Phi_{m,n}(x)^2 = \frac{L(f_0\otimes
    \chi_0^{-1}\kappa'^{-1}\chi_m,(k-1)/2)} {L(f_0\otimes
    \chi_0^{-1}\kappa'^{-1}\chi_n,(k-1)/2)}\chi(m/n)\kappa(m/n)(m/n)^{-1/2}.$$
\end{theo}
\begin{rema}
  The smoothness assumption is likely known to be superfluous at
  classical points, but the author knows of no reference in which this
  is completely proven for the eigencurves used here.
\end{rema}
\begin{proof}
  Let $x$ be as in the statement and choose a normal admissible open
  affinoid $\Sp(A)\subseteq C$ containing $x$.  Choose a finite
  $A$-module $M$ with $\scr{N}_\red^*\cong \widetilde{M}$ on $\Sp(A)$
  and let $K$ denote the fraction field of $A$.  Restricting to
  $\Sp(A)$, we may regard $a_n$ and $a_m$ as elements of $M$, and
  $\Phi_{m,n}$ as an element of $K$.  In particular, we
  have $$a_m\otimes 1 = a_n \otimes \Phi_{m,n}$$ in $M\otimes_A K$.

  Write $\Phi_{m,n}=f/g$ for $f,g\in A$ and clear denominators to
  get $$(ga_m-fa_n)\otimes 1=0.$$ Let $\mathfrak{m}$ denote the
  maximal ideal of $A$ corresponding to the point $x$.  By faithful
  flatness of completion, $M$ is torsion-free at $x$, so the equality
  $ga_m = fa_n$ actually holds in the localization $M_\mathfrak{m}$.
  By normality, $A_\mathfrak{m}$ is a DVR, so we may write
  $f=\pi^\alpha u$ and $g=\pi^\beta v$ where $\pi$ is a uniformizer
  for $A_\mathfrak{m}$ and $u$ and $v$ are units in this local ring.
  If $\beta>\alpha$, then we have $$a_n =
  vu^{-1}\pi^{\beta-\alpha}a_m\in \pi M_\mathfrak{m},$$ contrary to
  the hypothesis that $a_n(x)\neq 0$.  Thus $\beta\leq \alpha$, and
  $\Phi_{m,n}$ is regular at $x$.

  Since $a_n(x)\neq 0$, there exists a nonzero classical form $F$ in
  the eigenspace corresponding to $x$ with the property that
  $a_n(F)\neq 0$.  By the previous paragraph, we may write $$a_m(F) =
  a_n(F)\Phi_{m,n}(x).$$ By Corollary \ref{wald1} we have
  $L(f_0\otimes \chi_0^{-1}\kappa'^{-1}\chi_n, (k-1)/2)\neq 0$, and
  Theorem \ref{wald2} gives
  \begin{eqnarray*}
    \Phi_{m,n}(x)^2 &=& \frac{a_m(F)^2}{a_n(F)^2} \\ &=&
    \frac{L(f_0\otimes \chi_0^{-1}\kappa'^{-1}\chi_m,(k-1)/2)}
         {L(f_0\otimes
           \chi_0^{-1}\kappa'^{-1}\chi_n,(k-1)/2)}\chi(m/n)
         \kappa'(m/n)(m/n)^{(k-2)/2} \\ &=&
    \frac{L(f_0\otimes \chi_0^{-1}\kappa'^{-1}\chi_m,(k-1)/2)}
         {L(f_0\otimes
           \chi_0^{-1}\kappa'^{-1}\chi_n,(k-1)/2)}\chi(m/n)
         \kappa(m/n)(m/n)^{-1/2}.
  \end{eqnarray*}
  \end{proof}

\section{Characterization of the degenerate components}\label{sec:degen}

Let $N$ be a square-free positive integer that is relatively prime to
$2p$.  In this section we determine necessary and sufficient
conditions under which $a_n(x)=0$ for strictly classical points
$x\in\widetilde{D}(8N)$ whose Shimura lifting is not supercuspidal at
the prime $2$.  We then use these conditions to characterize the
components on which $a_n$ is degenerate.  Suppose that $x$ is a
strictly classical point of weight $\kappa(t) = t^{(k-1)/2}\kappa'(t)$
with $k\geq 5$ and $\kappa'$ of conductor $p^r$ and tame nebentypus
$\chi$ modulo $8N$.  Let $f_0$ denote the newform associated to this
point and observe that the eigenspace $S_x$ corresponding to $x$
satisfies $$S_x\subseteq S_{k/2}(8Np^r,\chi\kappa',f_0)$$ since $x$ is
strictly classical.

\begin{prop}\label{hardprop}\
With notation as above, let $\pi = \otimes_v\pi_v$ denote the
automorphic representation generated by the integral weight newform
$f_0$ associated to $x$, and suppose that $\pi_2$ has conductor at
most $2$.  The fiber $\scr{N}^*(x)$ has dimension
  $$\dim\ \!\scr{N}^*(x) =  \prod_{\ell\mid 2N}d_\ell$$ where
$$d_\ell = \begin{cases} 1+v_2(\ell) & \pi_\ell\ ramified \\2+v_2(\ell) & \pi_\ell\ unramified\end{cases}$$
Furthermore, $a_n(x)=0$ if and only if one of the following holds at
$x$:
    \begin{enumerate}
      \renewcommand{\labelenumi}{({\it \roman{enumi}})}
    \item $L(f_0\otimes \chi_0^{-1}\kappa'^{-1}\chi_n,(k-1)/2) = 0$
    \item for some $\ell\mid 2N$, $\pi_\ell$ is Steinberg
      with $$\underline{\lambda}_\ell = (\ell,n)_\ell\chi_0^{(\sim
        \ell)}(\ell)\kappa(\ell)$$ and
      \begin{itemize}
      \item[$\bullet$] if $\ell=2$, then $(n,-1)_2=\chi_0^{(2)}(-1)$
        and either $2\nmid n$ and $\chi_0\equiv 1$ on $1+4\Z_2$ or
        $2\mid n$ and $\chi_0\not\equiv 1$ on $1+4\Z_2$.
      \item[$\bullet$] if $\ell|N$, then either $\ell\nmid n$ and
        $\chi$ is unramified at $\ell$ or $\ell\mid n$ and $\chi$ is
        ramified at $\ell$
      \end{itemize}
    \item $\pi_p$ is Steinberg with $$\underline{\lambda}_p =
      \lambda_x(U_{p^2}) = (p,n)_p\chi_0(p)p^{(k-3)/2}$$ and either
      $p\nmid n$ and $\kappa'$ is unramified at $p$ (which is to say
      $\kappa'=1$) or $p\mid n$ and $\kappa'$ is ramified at $p$.
    \end{enumerate}
\end{prop}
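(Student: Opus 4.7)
The plan is to apply Waldspurger's Theorem \ref{waldmain} to the integral-weight newform $f_0$ and read off both the dimension formula and the vanishing criterion for $a_n$ from Waldspurger's explicit description of the eigenspace. The key identification is $\scr{N}^*(x) \cong \Hom_L(S_x, L)$ from (\ref{fibers}), so the dimension of the fiber equals $\dim S_x$, and $a_n(x) = 0$ is equivalent to $a_n \equiv 0$ on $S_x$.

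The first step is to verify hypotheses (H1) and (H2). Condition (H2) is immediate from the assumption that $\pi_2$ has conductor at most $2$ (hence is not supercuspidal), and (H1) follows from Remark \ref{remarks}(1): at every unramified principal series place $\pi_\ell$ the central character $\mu_1\mu_2$ agrees with the tame nebentypus $\chi^2$, which is even, and at the ramified principal series places dividing $2N$ the local conductor divides $\ell$. Waldspurger's theorem then describes $S_x \subseteq S_{k/2}(8Np^r,\chi\kappa',f_0)$ as the span of the $q$-expansions $f(A,\{c_v\})$, and a direct computation gives
\[
a_n\bigl(f(A,\{c_v\})\bigr) = A(n^\sqf)\prod_v c_v(n).
\]
Since the $c_v$ are chosen independently at each place and $A$ is fixed, the functional $a_n$ vanishes on $S_x$ if and only if either $A(n^\sqf)=0$ or there is some place $v$ at which $c_v(n)=0$ for every admissible $c_v \in U_v(e)$. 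By part (a) of Theorem \ref{waldmain} and the non-vanishing of the $\varepsilon$-factor, the first alternative is exactly condition (i).

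The bulk of the work is the case analysis of when every $c_\ell \in U_\ell(e)$ vanishes at $n$, carried out prime-by-prime using Waldspurger's explicit local recipe. For $\ell \mid N$ odd one has $\ord_\ell(8Np^r) = 1$, so only $e \in \{\tilde{n}_\ell,\ldots,1\}$ occur; unpacking $U_\ell(e)$ shows that uniform vanishing at $n$ forces $\pi_\ell$ to be Steinberg with the prescribed eigenvalue constraint $\underline{\lambda}_\ell = (\ell,n)_\ell \chi_0^{(\sim\ell)}(\ell)\kappa(\ell)$, together with the parity condition matching $v_\ell(n)$ to the ramification of $\chi$ at $\ell$; this is exactly the second bullet of (ii). The analysis at $\ell=p$ is parallel and yields (iii), with the finite-slope condition restricting $\pi_p$ to principal series or Steinberg. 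The case $\ell=2$ is where the argument is most delicate: one must examine $U_2(e)$ for $\tilde{n}_2 \leq e \leq 3$ and track how the local metaplectic subtleties at $2$ produce the Hilbert symbol condition $(n,-1)_2 = \chi_0^{(2)}(-1)$ together with the dichotomy on $\chi_0$ modulo $1+4\Z_2$ based on the parity of $v_2(n)$; this is the main obstacle and will constitute the bulk of the proof.

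Finally, for the dimension, multiplicativity of Waldspurger's construction across places gives
\[
\dim S_x = \prod_\ell \left(\sum_e \dim\, \mathrm{span}\, U_\ell(e)\right),
\]
where the inner sum is over admissible $e$. At primes $\ell \nmid 8Np^r$ the factor is $1$; at $\ell=p$ the $U_{p^2}$-eigenvalue fixed by $x$ cuts the choice down to a single dimension, contributing $1$; and at each $\ell \mid 2N$ the explicit enumeration of $U_\ell(e)$ in Waldspurger's recipe gives $d_\ell = 1+v_2(\ell)$ in the ramified case and $d_\ell = 2+v_2(\ell)$ in the unramified case, the extra $+v_2(\ell)$ at $\ell=2$ recording the extra range of $e$ coming from $\ord_2(8) = 3$. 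This yields the stated product formula and completes the proof.
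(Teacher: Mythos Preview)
Your overall strategy matches the paper's: identify $\scr{N}^*(x)$ with the dual of $S_x$, invoke Waldspurger's Theorem \ref{waldmain}, and read off both the dimension and the vanishing criterion from the explicit local data $c_\ell \in U_\ell(e)$.  Your treatment of the vanishing criterion for $a_n$ is essentially correct and does not require linear independence of the $f(A,\{c_v\})$, since $a_n$ vanishing on a spanning set is equivalent to vanishing on the span.

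There is, however, a genuine gap in your dimension argument.  You write $\dim S_x = \prod_\ell\bigl(\sum_e \dim\,\mathrm{span}\,U_\ell(e)\bigr)$ as though multiplicativity of the construction automatically yields the dimension, but Waldspurger's theorem only asserts that the $f(A,\{c_v\})$ \emph{span} $S_{k/2}(8Np^r,\chi\kappa',f_0)$; it does not assert that they are linearly independent.  Without independence you only get an upper bound.  The paper fills this gap in two steps: first, one uses the explicit action of the operators $U_{\ell^2}$ on the various $c_\ell$ (the forms built from $c_\ell'[\alpha_\ell]$, $'\!c_\ell[\alpha_\ell]$, $\gamma^0[0]$, etc.\ are eigenvectors or generalized eigenvectors with distinct eigenvalues) to reduce any linear dependence among the $f(A,\{c_v\})$ to the identical vanishing of one of them; second, one rules out identical vanishing by invoking the non-vanishing theorems of Bump--Friedberg--Hoffstein for quadratic twists of central $L$-values, which guarantee that $A$ is not identically zero on any fixed set of local square-class conditions.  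You need both ingredients, and in particular the appeal to \cite{bfh} is not optional.

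A minor point: your heuristic that the extra $+v_2(\ell)$ at $\ell=2$ comes from the larger range of $e$ is not quite how it works out.  In the paper's enumeration the additional contribution at $\ell=2$ arises from the functions $\gamma^0[0]$ or $\gamma''[0]$ (which have $U_{4}$-eigenvalue zero), not simply from an extra value of $e$; you will need to actually tabulate the $U_\ell(e)$ from Section VIII of \cite{waldspurger} rather than argue by counting $e$'s.
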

\begin{proof}
  Let $\kappa(t) = t^{(k-1)/2}\kappa'(t)$ denote the weight of $x$.
  In addition to using the notation in the preceding paragraphs, we
  borrow the following notation from \cite{waldspurger} for the
  duration of the proof: for a prime number $\ell$,
  $\underline{\lambda}_\ell$ will continue to denote the Hecke
  eigenvalue of $f_0$ for the appropriate operator (either $T_\ell$ or
  $U_\ell$) and we let $\lambda_\ell =
  \ell^{1-k/2}\underline{\lambda}_\ell$.  If $\ell\nmid 2Np$, we
  denote by $\alpha_\ell$ and $\alpha_\ell'$ the roots of the
  polynomial $$X^2 - \lambda_\ell X + (\chi(\ell)\kappa'(\ell))^2.$$
  For convenience, we also introduce the notation
  $\underline{\alpha}_\ell = \ell^{k/2-1}\alpha_\ell$.
  
  The following table, compiled from Section VIII of
  \cite{waldspurger}, lists the elements that arise in the sets
  $U_\ell(e)$ from the construction outlined in Section \ref{sec:wald}
  for the local conditions that can arise with our restrictions on the
  tame level.  The notation is that of Waldspurger, who gives explicit
  formulas for all of the functions below. We have not listed the
  value of $\tilde{n}_\ell$ and the particular integers $e$ that give
  rise to these elements, as these will be of no use to us here.
  \begin{center}
  \begin{tabular}{|c|c|c|}\hline
    $\ell$ & local condition & $c_\ell\in U_\ell(e)$ \\\hline \hline
    $\ell\nmid 2Np$ & none& $c_\ell^0[\lambda_\ell]$ \\\hline
    \multirow{7}{*}{$\ell=2$} & $\pi_2$ unramified, $\chi_0\equiv 1$
    on $1+4\Z_2$, $\alpha_2\neq \alpha_2'$ & $c_2'[\alpha_2],
    c_2'[\alpha_2'], \gamma^0[0]$ \\ & $\pi_2$ unramified,
    $\chi_0\equiv 1$ on $1+4\Z_2$, $\alpha_2=\alpha_2'$ &
    $c_2'[\alpha_2], c_2''[\alpha_2], \gamma^0[0]$ \\ & $\pi_2$
    unramified, $\chi_0\not\equiv 1$ on $1+4\Z_2$, $\alpha_2\neq
    \alpha_2'$ & $'\!c_2[\alpha_2], '\!c_2[\alpha_2'], \gamma''[0]$
    \\ & $\pi_2$ unramified, $\chi_0\not\equiv 1$ on $1+4\Z_2$,
    $\alpha_2=\alpha_2'$ & $'\!c_2[\alpha_2], ''\!c_2[\alpha_2],
    \gamma''[0]$ \\ & $\pi_2$ ramified principal series &
    $c_2^*[\sqrt{\lambda_2}], \gamma''[0]$ \\ & $\pi_2$ Steinberg,
    $\chi_0\equiv 1$ on $1+4\Z_2$ & $c_2^s[\lambda_2], \gamma^0[0]$
    \\ & $\pi_2$ Steinberg, $\chi_0\not\equiv 1$ on $1+4\Z_2$ &
    $^s\!c_2[\lambda_2], \gamma''[0]$ \\ \hline
    \multirow{6}{*}{$\ell\mid Np$} & $\pi_\ell$ unramified,
    $\chi\kappa'$ unramified & $c_\ell^0[\lambda_\ell],
    c_\ell'[\alpha_\ell]$ \\ & $\pi_\ell$ unramified, $\chi\kappa'$
    ramified, $\alpha_\ell\neq \alpha_\ell'$ &
    $'\!c_\ell[\alpha_\ell], '\!c_\ell[\alpha_\ell']$\\ & $\pi_\ell$
    unramified, $\chi\kappa'$ ramified, $\alpha_\ell= \alpha_\ell'$ &
    $'\!c_\ell[\alpha_\ell], ''\!c_\ell[\alpha_\ell]$ \\ & $\pi_\ell$
    ramified principal series & $c_\ell^*[\sqrt{\lambda_\ell}]$ \\ &
    $\pi_\ell$ Steinberg, $\chi\kappa'$ unramified &
    $c_\ell^s[\lambda_\ell]$ \\ & $\pi_\ell$ Steinberg, $\chi\kappa'$
    ramified & $^s\!c_\ell[\lambda_\ell]$ \\\hline
  \end{tabular}
  \end{center}

  As outlined in Section \ref{sec:wald}, Waldspurger uses these local
  functions to construct the space $S_{k/2}(8Np^r,\chi\kappa',f_0)$.
  From the explicit formulas for the $c_\ell$ in \cite{waldspurger},
  the effect of the appropriate Hecke operator at $\ell$ on the form
  $f(A,\{c_v\})$ can be read off from the element $c_\ell$.  In our
  setting, this operator is $T_{\ell^2}$ only in the case $c_\ell =
  c^0_\ell[\lambda_\ell]$, and a form constructed using this element is
  a $T_{\ell^2}$-eigenform with eigenvalue $\underline{\lambda}_\ell$.
  In all other cases, the relevant Hecke operator is $U_{\ell^2}$.
  With the exception of the functions $c_\ell^*[\sqrt{\lambda_\ell}]$,
  $c_\ell''[\alpha_\ell]$, and $''\!c_\ell[\alpha_\ell]$, a function
  constructed from an element in the above table is a
  $U_{\ell^2}$-eigenform whose eigenvalue is the argument of $c_\ell$
  in brackets times $\ell^{k/2-1}$ (i.e. the ``underlined version'' of
  this argument).  In the case $c_\ell^*[\sqrt{\lambda_\ell}]$, the
  same is true except the eigenvalue is $\underline{\lambda}_\ell$.
  If $c_\ell = c_\ell''[\alpha_\ell]$ then we
  have $$U_{\ell^2}f(A,\{c_v\}) = \underline{\alpha}_\ell f(A,\{c_v\})
  + \underline{\alpha}_\ell f(A,\{b_v\})$$ where $$b_v = \begin{cases}
    c_v & v\neq \ell \\ c_\ell'[\alpha_\ell] & v=\ell\end{cases}$$ The
    behavior in the $''\!c_\ell$ case is the same as this case with
    all the primes switched to the left side.

    Since $c_\ell = c_\ell^0[\lambda_\ell]$ for $\ell\nmid 4Np$, forms
    in $S_{k/2}(8Np^r,\chi\kappa',f_0)$ satisfy $T_{\ell^2}F =
    \underline{\lambda}_\ell F$ for \emph{all} $\ell\nmid 4Np$.  Thus,
    the subspace $S_x$ is obtained by further requiring only that the
    $F\in S_{k/2}(8Np^r,\chi\kappa',f_0)$ also satisfy
    $U_{p^2}F=\lambda_x(U_{p^2})F$.  In the ramified cases where there
    is just one $c_p$ in the above table, we have $\lambda_x(U_{p^2})
    = \underline{\lambda}_p$, and this condition is automatic.  In the
    unramified cases where there are two functions, this condition
    picks out exactly one of the functions, namely, $c_p'[\alpha_p]$
    or $'\!c_p[\alpha_p]$ depending on whether $\kappa'$ is unramified
    or ramified, respectively.  Here, in case $\alpha_p\neq
    \alpha_p'$, the choice of $\alpha_p$ is dictated by the value of
    $\lambda_x(U_{p^2})$.

    Exploiting these properties of the Hecke operators acting on the
    forms $f(A,\{c_v\})$, it is a simple exercise to show that, for a
    fixed $f_0$ in our setting, the collection of
    forms $f(A,\{c_v\})$ constructed from the elements in the above
    table are linearly dependent if an only if one of them vanishes
    identically.  That the latter does not occur follows from
    well-known results about non-vanishing of quadratic twists of
    central modular $L$-values (see, for example, \cite{bfh}).  This
    linear independence establishes the dimension claim of the
    proposition.
    
    By Theorem \ref{waldmain}, for a square-free positive integer $n$,
    we have $a_n(x)=0$ if and only if the $L$-value in Theorem
    \ref{waldmain} vanishes, or, for some $\ell$, we have
    $c_{\ell}(n)=0$ for all local functions appearing in the above
    table at $\ell|2Np$ that occur on the subspace $S_x$.  Again
    examining the explicit formulas in \cite{waldspurger} for the
    $c_\ell$ listed in this table (and using the fact that
    $\lambda_\ell\neq 0$ for $\ell$ dividing the square-free conductor
    of $f_0$) one concludes that, for $\ell\mid 2Np$, all
    $c_{\ell}(n)$ occurring in $S_x$ vanish if and only if $\pi_\ell$
    is Steinberg with $$\underline{\lambda}_\ell =
    (\ell,n)_\ell(\chi_0\kappa')^{(\sim \ell)}(\ell)\ell^{(\ell-3)/2}
    = \begin{cases} (\ell,n)_\ell\chi_0^{(\sim
        \ell)}(\ell)\ell^{-1}\kappa(\ell) & \ell\neq p
      \\ (p,n)_p\chi_0(p)p^{(k-3)/2} & \ell=p\end{cases}$$ and
  \begin{itemize}
  \item[$\bullet$] if $\ell=2$, then we have
    $(n,-1)_2=\chi_0^{(2)}(-1)$ and either $2\nmid n$ and
    $\chi_0\equiv 1$ on $1+4\Z_2$ or $2\mid n$ and $\chi_0\not\equiv
    1$ on $1+4\Z_2$
  \item[$\bullet$] if $\ell|Np$ then either $\ell\nmid n$ and
    $\chi\kappa'$ is unramified at $\ell$ or $\ell\mid n$ and
    $\chi\kappa'$ is ramified at $\ell$.
  \end{itemize}
\end{proof}

\begin{coro}\label{components}
  Let $C$ be an irreducible component of $\widetilde{D}(8N)_\red$ and
  suppose that $C$ maps to a component of $D(2N)_\red$ under the Shimura
  lifting.  Then the section $a_n$ is degenerate on $C$ if and only if
  one of the following holds:
  \begin{enumerate}
    \renewcommand{\labelenumi}{(\alph{enumi})}
  \item    
    $C$ contains a Zariski-dense set of classical points $x$ at which
    $$L(f_0\otimes\chi_0^{-1}\kappa'^{-1}\chi_n,(k-1)/2)=0$$
  \item for some $\ell\mid 2N$, $C$ is Steinberg at $\ell$ of sign
    $(\ell,n)_\ell$ and one of the following holds
    \begin{itemize}
    \item[$\bullet$] $\ell=2$, $(n,-1)_2=\chi_0^{(2)}(-1)$,
      $2\nmid n$, and $\chi_0\equiv 1$ on $1+4\Z_2$
    \item[$\bullet$] $\ell=2$, $(n,-1)_2=\chi_0^{(2)}(-1)$,
      $2\mid n$, and $\chi_0\not\equiv 1$ on $1+4\Z_2$
     \item[$\bullet$] $\ell\mid N$, $\ell\nmid n$, and $\chi$ is
       unramified at $\ell$
     \item[$\bullet$] $\ell\mid N$, $\ell\mid n$, and $\chi$ is
       ramified at $\ell$.
    \end{itemize}
  \end{enumerate}
\end{coro}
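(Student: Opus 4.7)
The plan is to reduce degeneracy of $a_n$ on $C$ to the pointwise vanishing criterion of Proposition \ref{hardprop} at a Zariski-dense set of suitable classical points, and then to translate each of the three clauses of that proposition into a global condition on $C$ using the component-level structure established in Section \ref{sec:shimura}. By Lemma \ref{degenlemm}, $a_n$ is degenerate on $C$ if and only if it vanishes on a Zariski-dense subset of $C$; equivalently, on a Zariski-dense subset of any prescribed Zariski-dense set. Since the low-slope classical points are Zariski-dense in $C$ by \cite{rigidshimura} and the hypothesis that $C$ maps to $D(2N)_\red$ under the Shimura lifting forces these points to be non-supercuspidal at $2$, one obtains a Zariski-dense set to which Proposition \ref{hardprop} applies. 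Because $C$ is irreducible and a Zariski-dense subset that decomposes into three pieces must contain at least one Zariski-dense piece, $a_n$ is degenerate on $C$ if and only if at least one of clauses (i)--(iii) of that proposition holds on a Zariski-dense set of such points.

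Clause (i) is precisely the pointwise version of clause (a) of the corollary and transcribes directly. Clause (iii) asserts that $\pi_p$ is Steinberg with a specific $U_p$-eigenvalue; by the $\ell=p$ analysis in Section \ref{sec:shimura}, the Steinberg-at-$p$ locus is a proper analytic subset of each irreducible component of $D(N)$, and since the Shimura lifting sends each irreducible component of $\widetilde{D}(8N)_\red$ isomorphically onto an irreducible component of $D(2N)_\red$, the same holds on $C$. Therefore clause (iii) cannot hold Zariski-densely on $C$ and contributes nothing.

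It remains to treat clause (ii), which requires $\pi_\ell$ to be Steinberg at some $\ell\mid 2N$ with $\underline{\lambda}_\ell = (\ell,n)_\ell \chi_0^{(\sim\ell)}(\ell)\kappa(\ell)$, plus the listed ramification/character conditions. By the analysis at the end of Section \ref{sec:shimura}, for each $\ell\mid 2N$ the Steinberg-at-$\ell$ locus on $\widetilde{D}(8N)$ breaks into $+$ and $-$ pieces, each of which either equals a union of whole irreducible components or meets a component in a proper analytic subset. The tame nebentypus $\chi$, and hence $\chi_0$, is constant on connected components, so the supplementary conditions in (ii) are component-level statements. Consequently clause (ii) holds Zariski-densely on $C$ if and only if $C$ is everywhere Steinberg at $\ell$ with the matching sign and the nebentypus conditions are satisfied, which is precisely clause (b) of the corollary.

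The main obstacle is the sign-matching in this final step: the eigenvalue condition in Proposition \ref{hardprop}(ii) is phrased in terms of the integral-weight data $\underline{\lambda}_\ell$ and $\chi_0$, whereas the $\pm$ labelling of components in Section \ref{sec:shimura} is in terms of the half-integral weight data $\lambda_x(U_{\ell^2})$ and $\chi$. Reconciling these and verifying that the Hilbert symbol $(\ell,n)_\ell$ appearing in Proposition \ref{hardprop}(ii) really is the $\pm$ that labels the Steinberg component of $C$ uses the Shimura lifting's compatibility of Hecke eigenvalues together with the identity $\chi_0(n) = \chi(n)(-1/n)^{(k-1)/2}$; it is a routine but somewhat intricate bookkeeping task, and is the only genuine computation in the argument.
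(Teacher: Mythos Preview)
Your proposal is correct and follows essentially the same approach as the paper: both reduce to Proposition \ref{hardprop} at the Zariski-dense set of low-slope classical points and use that the Steinberg-at-$p$ locus is proper analytic while the conditions in clause (ii) are component-level. The paper organizes the ``only if'' direction as a contrapositive (assume (b) fails, deduce (a)) rather than your pigeonhole decomposition, but this is purely cosmetic; your treatment is in fact slightly more explicit, particularly in flagging the sign-matching between the $\underline{\lambda}_\ell$ condition of Proposition \ref{hardprop}(ii) and the $\pm$ labelling of Section \ref{sec:shimura}, which the paper leaves entirely implicit.
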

\begin{proof}
  The ``if'' portion is clear from Proposition \ref{hardprop}.
  Suppose that $a_n$ is degenerate on $C$ and condition (b) in the
  statement does not hold.  The collection of points that either
  satisfy condition ({\it ii}) of Proposition \ref{hardprop} or at
  which $\pi_p$ is Steinberg lies in a proper analytic set in $C$.  By
  the results of Section 4 of \cite{rigidshimura}, the collection of
  low-slope (hence strictly classical) points is Zariski-dense in the
  complement of this proper analytic set in $C$. By Proposition
  \ref{hardprop}, we must have
  $L(f_0\otimes\chi_0^{-1}\kappa'^{-1}\chi_n,(k-1)/2) =0$ at these
  points.
\end{proof}

\begin{coro}\label{torsionfree}
  Suppose that $x\in \widetilde{D}(8N)_\red$ is a strictly classical
  point lying on an irreducible component $C$ that maps to a component
  of $D(2N)_\red$ under the Shimura lifting.  The sheaf
  $\scr{N}^*_\red$ is torsion-free on $C$ at $x$.
\end{coro}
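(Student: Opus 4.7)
The plan is to show that $\scr{N}^*_\red|_C$ is locally free at $x$ (which implies torsion-freeness) by matching the fiber dimension $\dim \scr{N}^*(x)$ with the generic rank $r$ of $\scr{N}^*_\red$ on $C$. Since $C$ is an integral curve and $\scr{N}^*_\red(y) = \scr{N}^*(y)$ for every $y$, upper semicontinuity of fiber dimension gives $\dim \scr{N}^*(y) \geq r$ on $C$ with equality at $y$ if and only if $\scr{N}^*_\red|_C$ is locally free at $y$; so the corollary reduces to this single numerical equality at $x$.

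First I would compute $r$ by locating a ``fully generic'' strictly classical point $x_0$ on $C$. By Section \ref{sec:shimura}, the component $C$ has a well-defined generic local type (ramified PS, unramified PS, or $\pm$-Steinberg) at each $\ell \mid 2N$, and the non-generic locus at each such prime is a proper analytic set (empty on everywhere-Steinberg components). Let $U$ denote the complement of the union of these loci and let $V \subseteq C$ denote the nonempty open locus where $\dim \scr{N}^*(y) = r$. Since $U \cap V$ is a nonempty open subset of the irreducible curve $C$ and low-slope strictly classical points are Zariski-dense in $C$ by \cite{rigidshimura}, there exists a strictly classical $x_0 \in U \cap V$. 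Applying Proposition \ref{hardprop} to $x_0$ (the hypothesis on $\pi_2$ holding because $C$ maps to a component of $D(2N)_\red$) gives $r = \dim \scr{N}^*(x_0) = \prod_{\ell \mid 2N} d_\ell^{\mathrm{gen}}$, where $d_\ell^{\mathrm{gen}}$ is the value of $d_\ell$ attached to the generic local type of $C$ at $\ell$.

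Next I would match $\dim \scr{N}^*(x)$ to $r$ for the hypothesized strictly classical $x$. Proposition \ref{hardprop} gives $\dim \scr{N}^*(x) = \prod_{\ell \mid 2N} d_\ell(x)$, and the key observation is that $d_\ell(x) \leq d_\ell^{\mathrm{gen}}$ for every $\ell \mid 2N$. Indeed, on components with $\chi$ ramified at $\ell$ the only classical local type is ramified PS (a squarefree-conductor Steinberg requires unramified central character), so $d_\ell(x) = 1 = d_\ell^{\mathrm{gen}}$; on generically Steinberg components the everywhere-Steinberg property of Section \ref{sec:shimura} forces the same equality; and on generically unramified-PS components $d_\ell^{\mathrm{gen}} = 2$ while the possible classical values lie in $\{1,2\}$. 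Multiplying over $\ell$ yields $\dim \scr{N}^*(x) \leq r$, and combined with the semicontinuity bound $\dim \scr{N}^*(x) \geq r$ this gives equality, hence local freeness at $x$.

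The main obstacle will be the rigidity statement of the preceding paragraph, and specifically ruling out a strict drop $d_\ell(x) < d_\ell^{\mathrm{gen}}$ at $x$ when $C$ is generically unramified PS at $\ell$ but $x$ is a Steinberg specialization. The semicontinuity bound already forces no such drop at any strictly classical point, and the conceptual content is that any such drop would require non-classical forms to appear in the family eigenspace $M_{\lambda_x}$ to make up the deficit via the identification (\ref{fibers}), thereby contradicting strict classicality.
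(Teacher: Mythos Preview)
Your approach is essentially the paper's: reduce torsion-freeness to the equality of fiber rank and generic rank on $C$, compute both via the dimension formula of Proposition \ref{hardprop}, bound the fiber rank from above using the fact that $d_\ell$ cannot increase under specialization (in the paper's phrasing, ``ramified does not degenerate to unramified''), and close with upper semicontinuity. Two minor points: your first case should be indexed by ``$C$ generically ramified principal series at $\ell$'' (equivalently $\chi^2$ ramified at $\ell$) rather than ``$\chi$ ramified at $\ell$'', since a quadratic $\chi^{(\ell)}$ still allows Steinberg or unramified principal series on the integral side; and your final paragraph is not a remaining obstacle but commentary, as the two inequalities you already established force the product equality and hence local freeness.
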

\begin{proof}
  Let $x$ be as in the statement.  The local ring of $C$ at $x$ is a
  domain, so it suffices to show that the fiber rank and generic rank
  of $\scr{N}^*$ coincide at $x$.  By hypothesis, $C$ is either
  generically irreducible principal series or generically Steinberg at
  each $\ell\mid 2N$.  As we have observed, if $C$ is generically
  ramified principal series or Steinberg then it is entirely so, while
  if it is generically unramified principal series, then $C$ may
  contain a proper analytic set of Steinberg points.
  
  By Proposition \ref{hardprop}, the generic rank of $\scr{N}^*$ on
  $C$ is $2^r\cdot s$ where $r$ is the number of $\ell\mid N$ at which
  $C$ is generically unramified principal series and $s=2$ or $3$
  according to whether $C$ is generically ramified (i.e. ramified
  principal series or Steinberg) or generically unramified principal
  series at the prime $2$, respectively.  But by Proposition
  \ref{hardprop} and the previous paragraph (which in effect says that
  ``ramified does not degenerate to unramified''), the dimension of
  the fiber does not increase at a strictly classical point.  By
  upper-semicontinuity of fiber dimension, this dimension cannot
  decrease from the generic rank either.  Thus the generic rank and
  fiber rank coincide at smooth strictly classical points of
  $C$.\end{proof}

In the following Theorem, we move the interpolation to the integral
weight eigencurve via the Shimura lifting.  In so doing, we may
effectively eliminate components that are degenerate for reasons other
than the vanishing of $L$-values by choosing a component on the
half-integral weight side with some care.

\begin{theo}
  Let $N$ be a square-free positive integer that is relatively prime
  to $2p$ and let $n$ and $m$ be square-free positive integers with
  $m/n\in (\Q_\ell^\times)^2$ for all $\ell\mid 2Np$.  Fix a square
  Dirichlet character $\psi$ modulo $2N$.  There exists a character
  $\chi$ modulo $8N$ such that $\chi^2=\psi$ with the following
  property: for any irreducible component $C$ of $D(2N)_\red$ of even
  weight and tame nebentypus $\psi$ on which $L(f_0\otimes
  \chi_0^{-1}\kappa'^{-1}\chi_n,(k-1)/2)$ does not vanish generically,
  there exists a meromorphic function $\Phi_{m,n}$ on $C$ such that
   $$\Phi_{m,n}(x)^2 = \frac{L(f_0\otimes
    \chi_0^{-1}\kappa'^{-1}\chi_m,(k-1)/2)} {L(f_0\otimes
    \chi_0^{-1}\kappa'^{-1}\chi_n,(k-1)/2)}\chi(m/n)\kappa(m/n)
  (m/n)^{-1/2}$$ holds for strictly classical points $x\in C$ outside
  of the discrete set at which $C$ is singular, condition (iii) of
  Proposition \ref{hardprop} is satisfied, or the denominator
  vanishes.
\end{theo}
\begin{proof}
  Let $\ell\mid 2N$ be a prime at which $\psi$ is unramified.
  Utilizing quadratic characters of prime conductor, we may choose
  $\chi$ to satisfy the following conditions at each such $\ell$:
  \begin{itemize}
  \item[$\bullet$] for $\ell\neq 2$, $\chi$ is unramified at $\ell$ if
    and only if $\ell\mid n$
  \item[$\bullet$] at $\ell=2$, $\chi_0^{(2)}\equiv 1$ on $1+4\Z_2$ if
    an only if $2\mid n$
  \end{itemize}
 We remark that it is in satisfying the second of these conditions for
 odd $n$ that necessitates working at level $8N$ instead of $4N$.

  Let $C$ be as in the statement and let $x_0\in C$ be a classical
  point of weight character $t^{k-1}\kappa'(t)^2$ where $\kappa'$ is a
  character modulo $p^r$. The even hypothesis combined with
  finite-slope and conductor considerations ensure that hypothesis
  (H1) is satisfied (see Remark \ref{remarks}(1)).  Suppose that $x_0$
  lies on at most one component of the image of $\widetilde{D}(8N)$ in
  $D(4N)$ under the Shimura lifting.  Since $\pi_2$ is not
  supercuspidal at $x_0$, hypothesis (H2) is satisfied, and the
  construction of \cite{waldspurger} outlined in Section
  \ref{sec:wald} implies that $S_{k/2}(8Np^r,\chi\kappa',f_0(x_0))\neq
  0$ and that this space moreover contains a nonzero eigenform for
  $U_{p^2}$ with eigenvalue $\lambda_{x_0}(U_p)$.  This form gives
  rise to a point on $\tilde{x}_0\in \widetilde{D}(8N)$ that maps to
  $x_0$ under the Shimura lifting.  Let $\widetilde{C}$ denote the
  unique irreducible component of $\widetilde{D}(8N)_\red$ containing
  this point (so that, in particular, $\widetilde{C}$ maps
  isomorphically to $C$ under the Shimura lifting).  Since the tame
  nebentypus is constant on $\widetilde{C}$, condition (b) of
  Corollary \ref{components} does not hold on $\widetilde{C}$ by our
  choice of $\chi$.  By hypothesis, condition (a) does not hold
  either, and we conclude that $a_n$ is non-degenerate on
  $\widetilde{C}$.

  Let $\Phi_{m,n}$ be the meromorphic function on $\widetilde{C}$
  given by Corollary \ref{phiexists}.  Let $x\in C$ be a smooth point
  of low slope.  Then, $x$ is strictly classical regarded as a point
  on $\widetilde{C}$ under the Shimura lifting.  By Corollary
  \ref{torsionfree}, $\scr{N}^*_\red$ is torsion-free at $x$.  If
  neither condition (i) nor condition (iii) of Proposition
  \ref{hardprop} hold at $x$, then $a_n(x)\neq 0$ and Theorem
  \ref{maininterp} implies that $\Phi_{m,n}$ is regular at $x$ and its
  square has the claimed value.
\end{proof}

This finishes the proof of Theorem \ref{maintheorem} in the case of
even tame level.  If $N$ is odd, the result follows simply by pulling
back through the natural map $D(N)\longrightarrow D(2N)$.

\providecommand{\bysame}{\leavevmode ---\ }
\providecommand{\og}{``}
\providecommand{\fg}{''}
\providecommand{\smfandname}{\&}
\providecommand{\smfedsname}{\'eds.}
\providecommand{\smfedname}{\'ed.}
\providecommand{\smfmastersthesisname}{M\'emoire}
\providecommand{\smfphdthesisname}{Th\`ese}

\bibliographystyle{smfplain}

\end{document}